\newcommand{\BB}{\mathbb B}
\newcommand{\CC}{\mathbb C}
\newcommand{\RR}{\mathbb R}
\newcommand{\TT}{\mathbb T}
\newcommand{\pat}{\partial_t}
\newcommand{\pax}{\partial_x}
\newcommand{\paa}{\partial_\alpha}
\newcounter{comentcount}
\newcounter{teocount}
\newcounter{propcount}
\newtheorem{lem}{Lemma}
\newtheorem{teo}[teocount]{Theorem}  
\newtheorem{NE}{Numerical Evidence}
\newenvironment{coment}
{\stepcounter{comentcount} {\bf \tt Remark} {\bf\tt\arabic{comentcount}} }{ }
\title{Local solvability and turning for the inhomogeneous Muskat problem}
\author{Luigi C. Berselli$^{\mbox{{\footnotesize 1}}}$, Diego C\'ordoba$^{\mbox{{\footnotesize 2}}}$, and Rafael Granero-Belinch\'on$^{\mbox{{\footnotesize 3}}}$}
\begin{document}

\maketitle 

\footnotetext[1]{Email: \texttt{berselli@dma.unipi.it} Dipartimento di matematica applicata "Ulisse Dini", via F. Buonarroti 1/c I-56127 Pisa.}

\footnotetext[2]{Email: \texttt{dcg@icmat.es} Instituto de Ciencias Matem\'aticas CSIC-UAM-UC3M-UCM, Consejo Superior de Investigaciones Cient\'ificas, C/Nicol\'as Cabrera, 13-15, Campus de Cantoblanco, 28049 - Madrid}

\footnotetext[3]{Email: \texttt{rgranero@math.ucdavis.edu} Department of Mathematics, University of California, Davis, CA 95616, USA}

\vspace{0.3cm}
\begin{abstract}
In this work we study the evolution of the free boundary between two different fluids in a porous medium where the permeability is a two dimensional step function. The medium can fill the whole plane $\RR^2$ or a bounded strip $S=\RR\times(-\pi/2,\pi/2)$. The system is in the stable regime if the denser fluid is below the lighter one. First, we show local existence in Sobolev spaces by means of energy method when the system is in the stable regime. Then we prove the existence of curves such that they start in the stable regime and in finite time they reach the unstable one.  This change of regime (turning) was first proven in \cite{ccfgl} for the homogeneus Muskat problem with infinite depth.
\end{abstract}
\vspace{0.3cm}

\textbf{Keywords}: Darcy's law, inhomogeneous Muskat problem, well-posedness, blow-up, maximum principle.

\textbf{Acknowledgments}: The authors are supported by the Grants MTM2011-26696 and SEV-2011-0087 from Ministerio de Ciencia e Innovaci\'on (MICINN). Diego C\' ordoba was partially supported by StG-203138CDSIF of the ERC. Rafael Granero-Belinch\'on is grateful to Department of Applied Mathematics "Ulisse Dini" of the Pisa University for the hospitality during May-July 2012. We are grateful to Instituto de Ciencias Matem\'aticas (Madrid) and to the Dipartimento di Ingegneria Aerospaziale (Pisa) for computing facilities.

\section{Introduction}
In this work we study the evolution of the interface between two different incompressible fluids with the same viscosity coefficient in a porous medium with two different permeabilities. This problem is of practical importance because it is used as a model for a geothermal reservoir (see \cite{CF} and references therein). The velocity of a fluid flowing in a porous medium satisfies Darcy's law (see \cite{bear,Muskat,bn}) 
\begin{equation}
\frac{\mu}{\kappa(\vec{x})}v=-\nabla p-g\rho(\vec{x}) (0,1),
\label{IIeq1} 
\end{equation}
where $\mu$ is the dynamic viscosity, $\kappa(\vec{x})$ is the permeability of the medium, $g$ is the acceleration due to gravity, $\rho(\vec{x})$ is the density of the fluid, $p(\vec{x})$ is the pressure of the fluid and $v(\vec{x})$ is the incompressible velocity field. In our favourite units, we can assume $g=\mu=1.$

The spatial domains considered in this work are $S=\RR^2,\TT\times\RR$ (infinite depth) and $\RR\times(-\pi/2,\pi/2)$ (finite depth). We have two immiscible and incompressible fluids with the same viscosity and different densities; $\rho^1$ fill in the upper domain $S^1(t)$ and $\rho^2$ fill in the lower domain $S^2(t)$. The curve 
$$
z(\alpha,t)=\{(z_1(\alpha,t),z_2(\alpha,t)): \: \alpha\in\RR\}
$$ 
is the interface between the fluids. In particular we are making the ansatz that $S^1$ and $S^2$ are a partition of $S$ and they are separated by a curve $z$.

The system is in the stable regime if the denser fluid is below the lighter one, \emph{i.e.} $\rho^2>\rho^1$. This is known in the literature as the Rayleigh-Taylor condition. The function that measures this condition is defined as
$$
RT(\alpha,t)=-(\nabla p^2(z(\alpha,t))-\nabla p^1(z(\alpha,t)))\cdot\partial_\alpha^\bot z(\alpha,t)>0.
$$

In the case with $\kappa(\vec{x})\equiv\text{costant}>0$, the motion of a fluid in a two-dimensional porous medium is analogous to the Hele-Shaw cell problem (see \cite{cheng2012global, constantin1999global, escher1997classical, H-S} and the references therein) and if the fluids fill the whole plane (in the case with the same viscosity but different densities) the contour equation satisfies (see \cite{c-g07})
\begin{equation}\label{IIfull}
\pat f=\frac{\rho^2-\rho^1}{2\pi}\text{P.V.}\int_\RR \frac{(\pax f(x)-\pax f(x-\eta))\eta}{\eta^2+(f(x)-f(x-\eta))^2}d\eta.
\end{equation}
They show the existence of classical solution locally in time (see \cite{c-g07} and also \cite{ambrose2004well, e-m10, escher2011generalized, KK}) in the Rayleigh-Taylor stable regime which means that $\rho^2>\rho^1$, and maximum principles for $\|f(t)\|_{L^\infty}$ and $\|\pax f(t)\|_{L^\infty}$ (see \cite{c-g09}). Moreover, in \cite{ccfgl} the authors show that there exists initial data in $H^4$ such that $\|\pax f\|_{L^\infty}$ blows up in finite time. Furthermore, in \cite{castro2012breakdown} the authors prove that there exist analytic initial data in the stable regime for the Muskat problem such that the solution turns to the unstable regime and later no longer belongs to $C^4$. In \cite{ccgs-10} the authors show an energy balance for $L^2$ and that if initially $\|\pax f_0\|_{L^\infty}<1$, then there is global lipschitz solution and if the initial datum has $\|f_0\|_{H^3}<1/5$ then there is global classical solution. In \cite{c-c-g10, SCH} the authors study the case with different viscosities. In \cite{knupfer2010darcy} the authors study the case where the interface reach the boundary in a moving point with a constant (non-zero) angle. 

\begin{figure}[t]
		\begin{center}
		\includegraphics[scale=0.4]{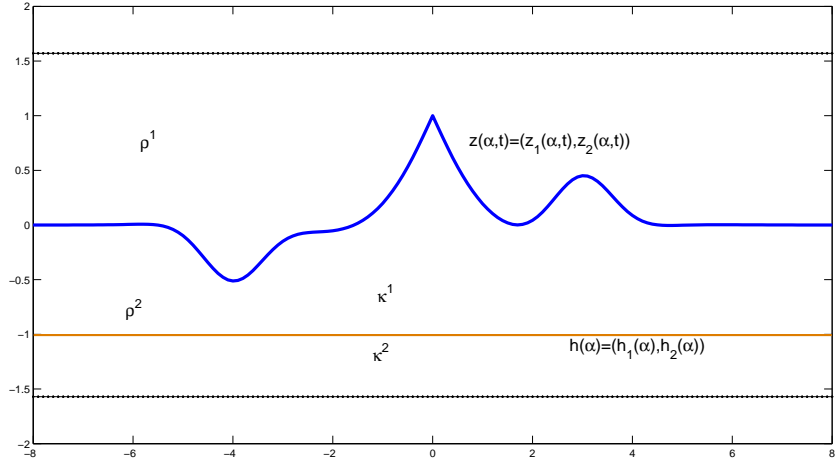} 
		\end{center}
		\caption{Physical situation}
\label{IIscheme}
\end{figure}

The case where the fluid domain is the strip $\RR\times(-l,l)$, with $0<l$, has been studied in \cite{CGO, e-m10, escher2011generalized}. In this regime the equation for the interface is
\begin{multline}
\pat f(x,t) = \frac{\rho^2-\rho^1}{8l}\text{P.V.}\int_\RR\bigg{[}\frac{\left (\partial_xf\left (x\right )-\partial_xf\left (x-\eta\right )\right)\sinh\left(\frac{\pi}{2l}\eta\right)}{\cosh \left(\frac{\pi}{2l}\eta\right)-\cos(\frac{\pi}{2l}(f(x)-f(x-\eta)))}\\
+ \frac{(\partial_xf\left (x\right )+\partial_xf\left (x-\eta\right )\sinh\left(\frac{\pi}{2l}\eta\right)}{\cosh \left(\frac{\pi}{2l}\eta\right)+\cos(\frac{\pi}{2l}(f(x)+f(x-\eta)))}\bigg{]}d\eta.
\label{IIeq0.1}
\end{multline}

For equation \eqref{IIeq0.1} the authors in \cite{CGO} obtain the existence of classical solution locally in time in the stable regime case where the initial interface does not reach the boundaries, and the existence of finite time singularities. These singularities mean that the curve is initially a graph in the stable regime, and in finite time, the curve can not be parametrized as a graph and the interface turns to the unstable regime. Also the authors study the effect of the boundaries on the evolution of the interface, obtaining the maximum principle and a decay estimate for $\|f\|_{L^\infty}$ and the maximum principle for $\|\pax f\|_{L^\infty}$ for initial datum satisfying smallness conditions on $\|\pax f_0\|_{L^\infty}$ and on $\|f_0\|_{L^\infty}$. So, not only the slope must be small, also amplitude of the curve plays a role. Both result differs from the results corresponding to the infinite depth case \eqref{IIfull}. We note that the case with boundaries can also be understood as a problem with different permeabilities where the permeability outside vanishes. In the forthcoming work \cite{GG} the authors compare the different models \eqref{IIfull}, \eqref{IIeq0.1} and \eqref{IIeq13} from the point of view of the existence of turning waves. 

In this work we study the case where permeability $\kappa(\vec{x})$ is a step function, more precisely, we have a curve 
$$
h(\alpha)=\{(h_1(\alpha),h_2(\alpha)): \: \alpha\in\RR\}
$$ 
separating two regions with different values for the permeability (see Figure \ref{IIscheme}). We study the regime with infinite depth, for periodic and for "flat at infinity" initial datum, but also the case where the depth is finite and equal to $\frac{\pi}{2}$. In the region above the curve $h(\alpha)$ the permeability is $\kappa(\vec{x})\equiv\kappa^1$, while in the region below the curve $h(\alpha)$ the permeability is $\kappa(\vec{x})\equiv\kappa^2\neq\kappa^1$. Note that the curve $h(\alpha)$ is known and fixed. Then it follows from Darcy's law that the vorticity is 
$$
\omega(\vec{x})=\varpi_1(\alpha,t)\delta(\vec{x}-z(\alpha,t))+\varpi_2(\alpha,t)\delta(\vec{x}-h(\alpha)),
$$
where $\varpi_1$ corresponds to the difference of the densities, $\varpi_2$ corresponding to the difference of permeabilities and $\delta$ is the usual Dirac's distribution. In fact both amplitudes for the vorticity are quite different, while $\varpi_1$ is a derivative, the amplitude $\varpi_2$ has a nonlocal character (see \eqref{IIeq10A}, \eqref{IIeq14A} and Section \ref{IIsec2}). The equation for the interface, when $h(x)=(x,-h_2)$ and the fluid fill the whole plane, is
\begin{multline}
\label{IIeq9}
\pat f(x)=\frac{\kappa^1(\rho^2-\rho^1)}{2\pi}\text{P.V.}\int_\RR\frac{(\pax f(x)-\pax f(\beta))(x-\beta)}{(x-\beta)^2+(f(x)-f(\beta))^2}d\beta\\
+\frac{1}{2\pi}\text{P.V.}\int_\RR\frac{\varpi_2(\beta)(x-\beta+\pax f(x)(f(x)+h_2))}{(x-\beta)^2+(f(x)+h_2)^2}d\beta,
\end{multline}
with
\begin{eqnarray}
\varpi_2(x)&=&\frac{\kappa^1-\kappa^2}{\kappa^2+\kappa^1}\frac{\kappa^1(\rho^2-\rho^1)}{\pi}\text{P.V.}\int_\RR\frac{\pax f(\beta)(h_2+f(\beta))}{(x-\beta)^2+(-h_2-f(\beta))^2}d\beta\label{IIeq10A}
\end{eqnarray}
If the fluids fill the whole space but the initial curve is periodic the equation reduces to
\begin{multline}\label{IIeq13}
\pat f(x)=\frac{\kappa^1(\rho^2-\rho^1)}{4\pi} \text{P.V.}\int_\TT\frac{\sin(x-\beta)(\pax f(x)-\pax f(\beta))d\beta}{\cosh(f(x)-f(\beta))-\cos(x-\beta)}\\
+\frac{1}{4\pi} \text{P.V.}\int_\TT\frac{(\pax f(x)\sinh(f(x)+h_2)+\sin(x-\beta))\varpi_2(\beta)d\beta}{\cosh(f(x)+h_2)-\cos(x-\beta)},
\end{multline}
where the second vorticity amplitude can be written as
\begin{eqnarray}
\varpi_2(x)&=&\frac{\kappa^1(\rho^2-\rho^1)}{2\pi}\frac{\kappa^1-\kappa^2}{\kappa^1+\kappa^2} \text{P.V.}\int_\TT\frac{\sinh(h_2+f(\beta))\pax f(\beta)d\beta}{\cosh(h_2+f(\beta))-\cos(x-\beta)}.\label{IIeq14A}
\end{eqnarray}

If we consider the regime where the amplitude of the wave and the depth of the medium are of the same order then the equation for the interface, when the depth is chosen to be $\pi/2$, is
\begin{eqnarray}
\pat f(x)&=&\frac{\kappa^1(\rho^2-\rho^1)}{4\pi}\text{P.V.}\int_\RR\frac{(\pax f(x)-\pax f(\beta))\sinh(x-\beta)}{\cosh(x-\beta)-\cos(f(x)-f(\beta))}d\beta\nonumber\\
&&+\frac{\kappa^1(\rho^2-\rho^1)}{4\pi}\text{P.V.}\int_\RR\frac{(\pax f(x)+\pax f(\beta))\sinh(x-\beta)}{\cosh(x-\beta)+\cos(f(x)+f(\beta))}d\beta\nonumber\\
&&+\frac{1}{4\pi}\text{P.V.}\int_\RR\frac{\varpi_2(\beta)(\sinh(x-\beta)+\pax f(x)\sin(f(x)+h_2))}{\cosh(x-\beta)-\cos(f(x)+h_2)}d\beta\nonumber\\
&&+\frac{1}{4\pi}\text{P.V.}\int_\RR\frac{\varpi_2(\beta)(-\sinh(x-\beta)+\pax f(x)\sin(f(x)-h_2))}{\cosh(x-\beta)+\cos(f(x)-h_2)}d\beta\label{IIeqv2},
\end{eqnarray}
where
\begin{eqnarray}
\varpi_2(x)&=&\mathcal{K}\frac{\kappa^1(\rho^2-\rho^1)}{2\pi}\text{P.V.}\int_\RR\pax f(\beta)\frac{\sin(h_2+f(\beta))}{\cosh(x-\beta)-\cos(h_2+f(\beta))}d\beta\nonumber\\
&&-\mathcal{K}\frac{\kappa^1(\rho^2-\rho^1)}{2\pi}\text{P.V.}\int_\RR\pax f(\beta)\frac{\sin(-h_2+f(\beta))}{\cosh(x-\beta)+\cos(-h_2+f(\beta))}d\beta\nonumber\\
&&+\frac{\mathcal{K}^2}{\sqrt{2\pi}}\frac{\kappa^1(\rho^2-\rho^1)}{2\pi}G_{h_2,\mathcal{K}}*\text{P.V.}\int_\RR\frac{\pax f(\beta)\sin(h_2+f(\beta))}{\cosh(x-\beta)-\cos(h_2+f(\beta))}d\beta\nonumber\\
&&-\frac{\mathcal{K}^2}{\sqrt{2\pi}}\frac{\kappa^1(\rho^2-\rho^1)}{2\pi}G_{h_2,\mathcal{K}}*\text{P.V.}\int_\RR\frac{\pax f(\beta)\sin(-h_2+f(\beta))}{\cosh(x-\beta)+\cos(-h_2+f(\beta))}d\beta, \label{IIw2defc}
\end{eqnarray}
with 
$$
G_{h_2,\mathcal{K}}(x)=\mathcal{F}^{-1}\left(\frac{\mathcal{F}\left(\frac{\sin(2h_2)}{\cosh(x)+\cos(2h_2)}\right)(\zeta)}{1+\frac{\mathcal{K}}{\sqrt{2\pi}}\mathcal{F}\left(\frac{\sin(2h_2)}{\cosh(x)+\cos(2h_2)}\right)(\zeta)}\right)
$$
a Schwartz function. 

\begin{coment}
For notational simplicity, we denote $\mathcal{K}=\frac{\kappa^1-\kappa^2}{\kappa^1+\kappa^2}$ and we drop the $t$ dependence.
\end{coment}

The plan of the paper is as follows: in Section \ref{IIsec2} we derive the contour equations \eqref{IIeq9},\eqref{IIeq13} and \eqref{IIeqv2}. In Section \ref{IIsec3} we show the local in time solvability and an energy balance for the $L^2$ norm. In Section \ref{IIsec5} we perform numerics and in Section \ref{IIsec4} we obtain finite time singularities for equations \eqref{IIeq9} \eqref{IIeq13} and \eqref{IIeqv2} when the physical parameters are in some region and numerical evidence showing that, in fact, every value is valid for the physical parameters.

\section{The contour equation}\label{IIsec2}
In this section we derive the contour equations \eqref{IIeq9}, \eqref{IIeq13} and \eqref{IIeqv2}, \emph{i.e.} the equations for the interface. First we obtain the equation in the infinite depth case, both, flat at infinity and periodic. Given $\omega$ a scalar, $\gamma,z,$ curves, and a spatial domain $\Omega=\TT$ or $\Omega=\RR$, we denote the Birkhoff-Rott integral as
\begin{equation}
\label{IIeq2}
BR(\omega,z)\gamma=\text{P.V.}\int_\Omega \omega(\beta) BS(\gamma_1(\alpha),\gamma_2(\alpha),z_1(\beta),z_2(\beta))d\beta,
\end{equation}
where $BS$ denotes the kernel of $\nabla^\perp\Delta^{-1}$ (which depends on the domain). If the domain is $\RR^2$ we have
\begin{equation}\label{IIBSplane}
BS(x,y,\mu,\nu)=\frac{1}{2\pi}\left(-\frac{y-\nu}{(y-\nu)^2+(x-\mu)^2}, \frac{x-\mu}{(y-\nu)^2+(x-\mu)^2}\right),
\end{equation}
for $\TT\times \RR$ we have
\begin{equation}\label{IIBSperiodic}
BS(x,y,\mu,\nu)=\frac{1}{4\pi}\left( \frac{-\sinh(y-\nu)}{\cosh(y-\nu)-\cos(x-\mu)}, \frac{\sin(x-\mu)}{\cosh(y-\nu)-\cos(x-\mu)}\right),
\end{equation}
and for $\RR\times(-\pi/2,\pi/2)$ the kernel is (see \cite{CGO})
\begin{multline}\label{IIBSconf}
BS(x,y,\mu,\nu)=\frac{1}{4\pi}\left(-\frac{\sin(y-\nu)}{\cosh(x-\mu)-\cos(y-\nu)}-\frac{\sin(y+\nu)}{\cosh(x-\mu)+\cos(y+\nu)},\right.\\
\left.\frac{\sinh(x-\mu)}{\cosh(x-\mu)-\cos(y-\nu)}-\frac{\sinh(x-\mu)}{\cosh(x-\mu)+\cos(y+\nu)}\right).
\end{multline}

\subsection{Infinite depth}
\subsubsection{Assuming $S=\RR^2$:} 
Using the kernel \eqref{IIBSplane}, we obtain 
\begin{equation}\label{IIBS}
v(\vec{x})=\frac{1}{2\pi}\text{P.V.}\int_\RR \varpi_1(\beta)\frac{(\vec{x}-z(\beta))^\perp}{|\vec{x}-z(\beta)|^2}d\beta+\frac{1}{2\pi}\text{P.V.}\int_\RR \varpi_2(\beta)\frac{(\vec{x}-h(\beta))^\perp}{|\vec{x}-h(\beta)|^2}d\beta,
\end{equation}
where $(a,b)^\perp=(-b,a).$

We have 
\begin{equation}
\label{IIeq3}
v^\pm(z(\alpha))=\lim_{\epsilon\rightarrow0}v(z(\alpha)\pm\epsilon\paa^\perp z(\alpha))=BR(\varpi_1,z)z+BR(\varpi_2,h)z\mp\frac{1}{2}\frac{\varpi_1(\alpha)}{|\paa z(\alpha)|^2}\paa z(\alpha),
\end{equation}
and
\begin{equation}
\label{IIeq4}
v^\pm(h(\alpha))=\lim_{\epsilon\rightarrow0}v(h(\alpha)\pm\epsilon\paa^\perp h(\alpha))=BR(\varpi_1,z)h+BR(\varpi_2,h)h\mp\frac{1}{2}\frac{\varpi_2(\alpha)}{|\paa h(\alpha)|^2}\paa h(\alpha).
\end{equation}
We observe that $v^+(z(\alpha))$ is the limit inside $S^1$ (the upper subdomain) and $v^-(z(\alpha))$ is the limit inside $S^2$ (the lower subdomain). The curve $z(\alpha)$ doesn't touch the curve $h(\alpha)$, so, the limit for the curve $h$ are in the same domain $S^i$.

Using Darcy's Law and assuming that the initial interface $z(\alpha,0)$ is in the region with permeability $\kappa^1$, we obtain
\begin{eqnarray*}
(v^-(z(\alpha))-v^+(z(\alpha)))\cdot\paa z(\alpha)&=&\kappa^1\left(-\paa(p^-(z(\alpha))-p^+(z(\alpha)))\right)-\kappa^1(\rho^2-\rho^1)\paa z_1(\alpha)\\
&=&0-\kappa^1(\rho^2-\rho^1)\paa z_2(\alpha),
\end{eqnarray*}
where in the last equality we have used the continuity of the pressure along the interface (see \cite{c-c-g10}). Using \eqref{IIeq3} we conclude
\begin{equation}
\label{IIeq5}
\varpi_1(\alpha)=-\kappa^1(\rho^2-\rho^1)\paa z_2(\alpha). 
\end{equation}
We need to determine $\varpi_2$. We consider
\begin{eqnarray*}
\left[\frac{v}{\kappa}\right]&=&\left(\frac{v^-(h(\alpha))}{\kappa^2}-\frac{v^+(h(\alpha))}{\kappa^1}\right)\cdot\paa h(\alpha)\\
&=&-\paa (p^-(h(\alpha))-p^+(h(\alpha)))\\
&=&0,
\end{eqnarray*}
where the first equality is due to Darcy's Law. Using the expression \eqref{IIeq4} we have
\begin{equation}\label{w2eq}
\left[\frac{v}{\kappa}\right]=\left(\frac{1}{\kappa^2}-\frac{1}{\kappa^1}\right)\left(BR(\varpi_1,z)h+BR(\varpi_2,h)h\right)\cdot\paa h(\alpha)+\left(\frac{1}{2\kappa^2}+\frac{1}{2\kappa^1}\right)\varpi_2.
\end{equation}
We take $h(\alpha)=(\alpha,-h_2)$, with $h_2>0$ a fixed constant. Then 
$$
BR(\varpi_2,h)h\cdot\paa h=\left(0,\frac{1}{2}H(\varpi_2)\right)\cdot(1,0)=0,
$$ 
where $H$ denotes the Hilbert transform. Finally, we have
\begin{equation}
\label{IIeq6}
\varpi_2(\alpha)=-2\mathcal{K}BR(\varpi_1,z)h\cdot(1,0)=\mathcal{K}\frac{1}{\pi}\text{P.V.}\int_\RR\varpi_1(\beta)\frac{-h_2-z_2(\beta)}{|h(\alpha)-z(\beta)|^2}d\beta,
\end{equation}
(see Remark 1 for the definition of $\mathcal{K}$).
The identity
$$
\int_{\RR}\partial_\beta\log((A-z_1(\beta))^2+(B-z_2(\beta))^2)=0,
$$
gives us
$$
\frac{1}{2\pi}\text{P.V.}\int_\RR (-\paa z_2(\beta)) \frac{z_2(\alpha)-z_2(\beta)}{|z(\alpha)-z(\beta)|^2}d\beta=\frac{1}{2\pi}\text{P.V.}\int_\RR\paa z_1(\beta) \frac{z_1(\alpha)-z_1(\beta)}{|z(\alpha)-z(\beta)|^2}d\beta,
$$
and
$$
\frac{1}{2\pi}\text{P.V.}\int_\RR \paa z_2(\beta) \frac{h_2+z_2(\beta)}{|h(\alpha)-z(\beta)|^2}d\beta=\frac{1}{2\pi}\text{P.V.}\int_\RR\paa z_1(\beta) \frac{h_1(\alpha)-z_1(\beta)}{|h(\alpha)-z(\beta)|^2}d\beta.
$$
Thus,
\begin{multline}
\label{IIeq7}
\varpi_2(\alpha)=\mathcal{K}\frac{\kappa^1(\rho^2-\rho^1)}{\pi}\text{P.V.}\int_\RR\paa z_2(\beta)\frac{h_2+z_2(\beta)}{|h(\alpha)-z(\beta)|^2}d\beta\\
=\mathcal{K}\frac{\kappa^1(\rho^2-\rho^1)}{\pi}\text{P.V.}\int_\RR\paa z_1(\beta) \frac{h_1(\alpha)-z_1(\beta)}{|h(\alpha)-z(\beta)|^2}d\beta,
\end{multline}
and
$$
BR(\varpi_1,z)z=\frac{-\kappa^1(\rho^2-\rho^1)}{2\pi}\text{P.V.}\int_\RR\frac{z_1(\alpha)-z_1(\beta)}{|z(\alpha)-z(\beta)|^2}\paa z(\beta)d\beta
$$
Due to the conservation of mass the curve $z$ is advected by the flow, but we can add any tangential term in the equation for the evolution of the interface without changing the shape of the resulting curve (see \cite{c-c-g10}), \emph{i.e.} we consider that the equation for the curve is
$$
\pat z(\alpha)=v(\alpha)+c(\alpha,t)\partial_\alpha z(\alpha).
$$
Taking $c(\alpha)=-v_1(\alpha)$, we conclude
\begin{multline}
\label{IIeq8}
\pat z=\frac{\kappa^1(\rho^2-\rho^1)}{2\pi}\text{P.V.}\int_\RR\frac{z_1(\alpha)-z_1(\beta)}{|z(\alpha)-z(\beta)|^2}(\paa z(\alpha)-\paa z(\beta))d\beta\\
+\frac{1}{2\pi}\text{P.V.}\int_\RR\varpi_2(\beta)\frac{(z(\alpha)-h(\beta))^\perp}{|z(\alpha)-h(\beta)|^2}d\beta\\
+\paa z(\alpha)\frac{1}{2\pi}\text{P.V.}\int_\RR\varpi_2(\beta)\frac{z_2(\alpha)+h_2}{|z(\alpha)-h(\beta)|^2}d\beta.
\end{multline}

By choosing this tangential term, if our initial datum can be parametrized as a graph, we have $\pat z_1=0.$ Therefore the parametrization as a graph propagates. 

Finally we conclude \eqref{IIeq9} as the evolution equation for the interface (which initially is a graph above the line $y\equiv-h_2$). We remark that the second vorticity \eqref{IIeq10A} can be written in equivalent ways

\begin{eqnarray}
\varpi_2(x)&=&\mathcal{K}\frac{\kappa^1(\rho^2-\rho^1)}{\pi}\text{P.V.}\int_\RR\pax f(\beta)\frac{h_2+f(\beta)}{(x-\beta)^2+(-h_2-f(\beta))^2}d\beta\label{IIeq10}\\
&=&\mathcal{K}\frac{\kappa^1(\rho^2-\rho^1)}{\pi}\text{P.V.}\int_\RR \frac{x-\beta}{(x-\beta)^2+(-h_2-f(\beta))^2}d\beta\label{IIeq10.b}\\
&=&\mathcal{K}\frac{\kappa^1(\rho^2-\rho^1)}{2\pi}\text{P.V.}\int_\RR\pax\log ((x-\beta)^2+(-h_2-f(\beta))^2)d\beta.\nonumber
\end{eqnarray}

\begin{coment}
Notice that in the case with different viscosities the expression for the amplitude of the vorticity located at the interface $z(\alpha)$ (see equation \eqref{IIeq5}) is no longer valid. Instead, we have
$$
-\kappa^1(\rho^2-\rho^1)\paa z_2(\alpha)=\left(\mu^2-\mu^1\right)\left(BR(\varpi_1,z)z+BR(\varpi_2,h)z\right)\cdot\paa z(\alpha)+\left(\frac{\mu^2+\mu^1}{2}\right)\varpi_1.
$$
To this integral equation, we add the equation \eqref{w2eq} or \eqref{IIeq7}. Thus, one needs to invert an operator. This is a rather delicate issue that is beyond the scope of this paper (see \cite{c-c-g10} for further details in the case $\kappa^1=\kappa^2$).
\end{coment}
\subsubsection{Assuming $S=\TT\times\RR$:} 
We have that \eqref{IIBS} is still valid, but now $\varpi_i$ are periodic functions and $z(\alpha+2k\pi)=z(\alpha)+(2k\pi,0)$. Using complex variables notation we have 
\begin{multline*}
\bar{v}(\vec{x})=\frac{1}{2\pi i}\text{P.V.}\int_\RR\frac{\varpi_1(\beta)}{\vec{x}-z(\beta)}d\beta+\frac{1}{2\pi i}\text{P.V.}\int_\RR\frac{\varpi_2(\beta)}{\vec{x}-h(\beta)}d\beta\\
=\frac{1}{2\pi i}\left(\text{P.V.}\int_{-\pi}^{\pi}+\sum_{k\geq1}\left(\int_{(2k-1)\pi}^{(2k+1)\pi}+\int_{-(2k+1)\pi}^{-(2k-1)\pi}\right)\right)\frac{\varpi_1(\beta)}{\vec{x}-z(\beta)}+\frac{\varpi_2(\beta)}{\vec{x}-h(\beta)}d\beta.
\end{multline*}
Changing variables and using the identity
$$
\frac{1}{z}+\sum_{k\geq1}\frac{2z}{z^2-(2k\pi)^2}=\frac{1}{2\tan(z/2)},\;\;\forall z\in\CC,
$$
we obtain
$$
\bar{v}(\vec{x})=\frac{1}{4\pi i}\left(\text{P.V.}\int_\TT\frac{\varpi_1(\beta)}{\tan((\vec{x}-z(\beta))/2)}d\beta + \text{P.V.}\int_\TT\frac{\varpi_2(\beta)}{\tan((\vec{x}-h(\beta))/2)}d\beta\right).
$$
Equivalently,
\begin{multline*}
v(\vec{x})=\frac{1}{4\pi} \left(\text{P.V.}\int_\TT\frac{-\sinh(y-z_2(\beta))\varpi_1(\beta)d\beta}{\cosh(y-z_2(\beta))-\cos(x-z_1(\beta))}\right.\\
\left.+\text{P.V.}\int_\TT\frac{-\sinh(y-h_2(\beta))\varpi_2(\beta)d\beta}{\cosh(y-h_2(\beta))-\cos(x-h_1(\beta))}\right)\\
+\frac{i}{4\pi}\left( \text{P.V.}\int_\TT\frac{\sin(x-z_1(\beta))\varpi_1(\beta)d\beta}{\cosh(y-z_2(\beta))-\cos(x-z_1(\beta))}\right.\\
\left.+\text{P.V.}\int_\TT\frac{\sin(x-h_1(\beta))\varpi_2(\beta)d\beta}{\cosh(y-h_2(\beta))-\cos(x-h_1(\beta))}\right).
\end{multline*}
Recall that \eqref{IIeq5} and \eqref{IIeq7} are still valid if $h(\alpha)=(\alpha,-h_2)$ for $0<h_2$ a fixed constant. We have
$$
\int_{\TT}\partial_\beta\log(\cosh(B-z_2(\beta))-\cos(A-z_1(\beta)))d\beta=0,
$$
thus, the velocity in the curve when the correct tangential terms are added is
\begin{multline}\label{IIeq11}
\pat z(\alpha)=\frac{1}{4\pi} \left(\kappa^1(\rho^2-\rho^1)\text{P.V.}\int_\TT\frac{\sin(z_1(\alpha)-z_1(\beta))(\paa z(\alpha)-\paa z(\beta))d\beta}{\cosh(z_2(\alpha)-z_2(\beta))-\cos(z_1(\alpha)-z_1(\beta))}\right.\\
\left.+(\paa z_1(\alpha)-1)\text{P.V.}\int_\TT\frac{\sinh(z_2(\alpha)+h_2)\varpi_2(\beta)d\beta}{\cosh(z_2(\alpha)+h_2)-\cos(z_1(\alpha)-h_1(\beta))}\right)\\
+\frac{i}{4\pi} \text{P.V.}\int_\TT\frac{(\paa z_2(\alpha)\sinh(z_2(\alpha)+h_2)+\sin(z_1(\alpha)-h_1(\beta)))\varpi_2(\beta)d\beta}{\cosh(z_2(\alpha)+h_2)-\cos(z_1(\alpha)-h_1(\beta))}.
\end{multline}
We can do the same in order to write $\varpi_2$ as an integral on the torus.
\begin{multline}\label{IIeq12}
\varpi_2(\alpha)=-2\mathcal{K}BR(\varpi_1,z)h\cdot(1,0)\\
=\frac{1}{2\pi}\mathcal{K} \text{P.V.}\int_\TT\frac{\sinh(-h_2-z_2(\beta))\varpi_1(\beta)d\beta}{\cosh(-h_2-z_2(\beta))-\cos(h_1(\alpha)-z_1(\beta))}\\
=\frac{\kappa^1(\rho^2-\rho^1)}{2\pi}\mathcal{K} \text{P.V.}\int_\TT\frac{\sinh(h_2+z_2(\beta))\paa z_2(\beta)d\beta}{\cosh(-h_2-z_2(\beta))-\cos(h_1(\alpha)-z_1(\beta))}.
\end{multline}
If the initial datum can be parametrized as a graph the equation for the interface reduces to \eqref{IIeq13}, where the second vorticity amplitude \eqref{IIeq14A} can be written as
\begin{eqnarray}
\varpi_2(x)&=&\frac{1}{2\pi}\mathcal{K} \text{P.V.}\int_\TT\frac{\sinh(-h_2-f(\beta))\varpi_1(\beta)d\beta}{\cosh(-h_2-f(\beta))-\cos(x-\beta)}\nonumber\\
&=&\frac{\kappa^1(\rho^2-\rho^1)}{2\pi}\mathcal{K} \text{P.V.}\int_\TT\frac{\sinh(h_2+f(\beta))\pax f(\beta)d\beta}{\cosh(h_2+f(\beta))-\cos(x-\beta)}\label{IIeq14}\\
&=&\frac{\kappa^1(\rho^2-\rho^1)}{2\pi}\mathcal{K} \text{P.V.}\int_\TT\frac{\sin(x-\beta)d\beta}{\cosh(h_2+f(\beta))-\cos(x-\beta)}.\label{IIeq14.1}
\end{eqnarray}

\subsection{Finite depth} 
Now we consider the bounded porous medium $\RR\times (-\pi/2,\pi/2)$ (see Figure \ref{IIscheme}). This regime is equivalent to the case with more than two $\kappa^i$ because the boundaries can be understood as regions with $\kappa=0$. As before,
$$
v(x,y)=\text{P.V.}\int_{\RR}\varpi_1(\beta)BS(x,y,z_1(\beta),z_2(\beta))d\beta+\text{P.V.}\int_{\RR}\varpi_2(\beta)BS(x,y,h_1(\beta),h_2(\beta))d\beta.
$$
We assume that $h(\alpha)=(\alpha,-h_2)$ with $0<h_2<\pi/2$. We have that $\varpi_1$ is given by \eqref{IIeq5}. The main difference between the finite depth and the infinite depth is at the level of $\varpi_2$. As in the infinite depth case we have
$$
0=\left(\frac{1}{\kappa^2}-\frac{1}{\kappa^1}\right)\left(BR(\varpi_1,z)h+BR(\varpi_2,h)h\right)\cdot\paa h(\alpha)+\left(\frac{1}{2\kappa^2}+\frac{1}{2\kappa^1}\right)\varpi_2,
$$
where now $BR$ has the usual definition \eqref{IIeq2} in terms of $BS$ in expression \eqref{IIBSconf}. In the unbounded case we have an explicit expression for $\varpi_2$ \eqref{IIeq7} in terms of $z$ and $h$, but now we have a Fredholm integral equation of second kind:
\begin{equation}\label{IIw2def}
\varpi_2(\alpha)+\frac{\mathcal{K}}{2\pi}\;\text{P.V.}\int_\RR\frac{\varpi_2(\beta)\sin(2h_2)}{\cosh(\alpha-\beta)+\cos(2h_2)}d\beta=-2\mathcal{K}BR(\varpi_1,z)h\cdot(1,0).
\end{equation}

After taking the Fourier transform, denoted by $\mathcal{F}(\cdot)(\zeta)$, and using some of its basic properties, we have
\begin{equation*}
\mathcal{F}(\varpi_2)(\zeta)\left(1+\frac{\mathcal{K}}{\sqrt{2\pi}}\mathcal{F}\left(\frac{\sin(2h_2)}{\cosh(x)+\cos(2h_2)}\right)(\zeta)\right)=-2\mathcal{K}\mathcal{F}(BR(\varpi_1,z)h\cdot(1,0))(\zeta).
\end{equation*}
We can solve the equation for $\varpi_2$ for any $|\mathcal{K}|<\delta(h_2)$ with 
\begin{equation}\label{IIdelta}
\delta(h_2)=\min\left\{1,\frac{\sqrt{2\pi}}{\max_{\zeta}\left|\mathcal{F}\left(\frac{\sin(2h_2)}{\cosh(x)+\cos(2h_2)}\right)\right|}\right\}.
\end{equation}
We obtain 
\begin{multline}\label{IIw2defb}
\varpi_2(\alpha)=-2\mathcal{K}BR(\varpi_1,z)h\cdot(1,0)\\
+\frac{2\mathcal{K}^2}{\sqrt{2\pi}}BR(\varpi_1,z)h\cdot(1,0)*\mathcal{F}^{-1}\left(\frac{\mathcal{F}\left(\frac{\sin(2h_2)}{\cosh(x)+\cos(2h_2)}\right)(\zeta)}{1+\frac{\mathcal{K}}{\sqrt{2\pi}}\mathcal{F}\left(\frac{\sin(2h_2)}{\cosh(x)+\cos(2h_2)}\right)(\zeta)}\right).
\end{multline}
Now we observe that if $s(\zeta)$ is a function in the Schwartz class, $\mathcal{S}$, such that $1+s(\zeta)>0$ we have that 
$$
\frac{s(\zeta)}{1+s(\zeta)}\in\mathcal{S},
$$
and we obtain
$$
G_{h_2,\mathcal{K}}(x)=\mathcal{F}^{-1}\left(\frac{\mathcal{F}\left(\frac{\sin(2h_2)}{\cosh(x)+\cos(2h_2)}\right)(\zeta)}{1+\frac{\mathcal{K}}{\sqrt{2\pi}}\mathcal{F}\left(\frac{\sin(2h_2)}{\cosh(x)+\cos(2h_2)}\right)(\zeta)}\right)\in\mathcal{S}.
$$
Recall here that in order to obtain $\varpi_2$ we invert an integral operator. In general this is a delicate issue (compare with \cite{c-c-g10}), but with our choice of $h$ this point can be addressed in a simpler way. Using
$$
\int_\RR\partial_\beta\log\left(\cosh(x-z_1(\beta))\pm\cos(y\pm z_2(\beta))\right)d\beta=0,
$$
and adding the correct tangential term, we obtain
\begin{eqnarray}
\pat z(\alpha)&=&\frac{\kappa^1(\rho^2-\rho^1)}{4\pi}\text{P.V.}\int_\RR\frac{(\paa z(\alpha)-\paa z(\beta))\sinh(z_1(\alpha)-z_1(\beta))}{\cosh(z_1(\alpha)-z_1(\beta))-\cos(z_2(\alpha)-z_2(\beta))}d\beta\nonumber\\
&&+\frac{\kappa^1(\rho^2-\rho^1)}{4\pi}\text{P.V.}\int_\RR\frac{(\paa z_1(\alpha)-\paa z_1(\beta),\paa z_2(\alpha)+\paa z_2(\beta))\sinh(z_1(\alpha)-z_1(\beta))}{\cosh(z_1(\alpha)-z_1(\beta))+\cos(z_2(\alpha)+z_2(\beta))}d\beta\nonumber\\
&&+\frac{1}{4\pi}\text{P.V.}\int_\RR\varpi_2(\beta)BS(z_1(\alpha),z_2(\alpha),\beta,-h_2)d\beta\nonumber\\
&&+\frac{\paa z(\alpha)}{4\pi}\text{P.V.}\int_\RR\varpi_2(\beta)\frac{\sin(z_2(\alpha)+h_2)}{\cosh(z_1(\alpha)-\beta)-\cos(z_2(\alpha)+h_2)}d\beta\nonumber\\ 
&&+\frac{\paa z(\alpha)}{4\pi}\text{P.V.}\int_\RR\varpi_2(\beta)\frac{\sin(z_2(\alpha)-h_2)}{\cosh(z_1(\alpha)-\beta)+\cos(z_2(\alpha)-h_2)}d\beta\label{IIeqv}.
\end{eqnarray}
If the initial curve can be parametrized as a graph the equation reduces to \eqref{IIeqv2} where $\varpi_2$ is defined in \eqref{IIw2defc}.

\begin{coment}
If $h_2=\pi/4$ by an explicit computation we obtain $\delta(\pi/4)=1$, thus, any $\mathcal{K}$ is valid. Moreover, we have tested numerically that the same remains valid for any $0<h_2<\pi/2$, so \eqref{IIw2defc} would be correct for any $\mathcal{K}$. 
\end{coment}

\section{Well-posedness in Sobolev spaces}\label{IIsec3}
\subsection{Energy balance for the $L^2$ norm}\label{IIsec3.0}
Here we obtain an energy balance inequality for the $L^2$ norm of the solution of equation \eqref{IIeqv2}.
We define $\Omega^1=\{(x,y),f(x,t)<y<\pi/2\}$, $\Omega^2=\{(x,y),-h_2<y<f(x,t))\}$ and $\Omega^3=\{(x,y),-\pi/2<y<-h_2\}$.
\begin{lem}For every $0<\kappa^ 1,\kappa^ 2$ the smooth solutions of \eqref{IIeqv2} in the stable regime, \emph{i.e.} $\rho^2>\rho^1$, case verifies
\begin{equation}\label{IIEB2}
\|f(t)\|^2_{L^2(\RR)}+\int_0^t\frac{\|v\|_{L^2(\RR\times(-h_2,\pi/2))}^2}{\kappa^1(\rho^2-\rho^1)}+\frac{\|v\|^2_{L^2(\RR\times(-\pi/2,-h_2))}}{\kappa^2(\rho^2-\rho^1)}ds=\|f_0\|^2_{L^2(\RR)}.
\end{equation}
\end{lem}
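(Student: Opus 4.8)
The plan is to read off \eqref{IIEB2} from the incompressibility and the Darcy structure of the reconstructed Eulerian velocity $v$, rather than from the contour equation \eqref{IIeqv2} by hand. Here $v(x,y)$ is the velocity generated by Biot--Savart from the two vortex sheets $\varpi_1\delta_z$ and $\varpi_2\delta_h$ (with kernel \eqref{IIBSconf}), and the facts I would use are: $\na\cdot v=0$ on each of $\Omega^1,\Omega^2,\Omega^3$; $v\cdot n=0$ on the rigid walls $\{y=\pm\pi/2\}$ (this is encoded in \eqref{IIBSconf}, see \cite{CGO}); the normal component $v\cdot n$ is continuous across $z$ and across $h$ (the vortex sheets carry a purely tangential velocity jump); the pressure $p$ is continuous across $z$ and across $h$ (the very facts used to derive \eqref{IIeqv} in Section \ref{IIsec2}); and Darcy's law \eqref{IIeq1} with $g=\mu=1$ gives, in $\Omega^i$, $v=-\kappa_i\bigl(\na p^i+\rho_i(0,1)\bigr)$, where $(\rho_1,\kappa_1)=(\rho^1,\kappa^1)$ on $\Omega^1$, $(\rho_2,\kappa_2)=(\rho^2,\kappa^1)$ on $\Omega^2$, and $(\rho_3,\kappa_3)=(\rho^2,\kappa^2)$ on $\Omega^3$. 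I will also use that for smooth, sufficiently decaying $f$ there are no contributions from $|x|\to\infty$.

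\emph{Step 1: I would pair Darcy's law with $v$.} Since $\na\cdot v=0$ one has $\na\cdot(p^iv)=v\cdot\na p^i$ and $\na\cdot(yv)=v_2$, so on each $\Omega^i$ the divergence theorem gives
\begin{equation*}
\int_{\Omega^i}\frac{|v|^2}{\kappa_i}\,dA=-\int_{\Omega^i}v\cdot\na p^i\,dA-\rho_i\int_{\Omega^i}v_2\,dA=-\int_{\pa\Omega^i}p^i\,(v\cdot n)\,dS-\rho_i\int_{\pa\Omega^i}y\,(v\cdot n)\,dS .
\end{equation*}
Summing over $i=1,2,3$: the wall integrals vanish because $v\cdot n=0$ there; along $h$ everything cancels because $p$ and $v\cdot n$ are continuous, the outward normals of $\Omega^2$ and $\Omega^3$ are opposite, and the density equals $\rho^2$ on both sides; along $z$ the pressure integrals cancel for the same reason, while the gravity integrals do not, since $\rho$ jumps from $\rho^1$ to $\rho^2$ there. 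With $n$ the upward unit normal on $z$ this leaves
\begin{equation*}
\frac{\|v\|^2_{L^2(\RR\times(-h_2,\pi/2))}}{\kappa^1}+\frac{\|v\|^2_{L^2(\RR\times(-\pi/2,-h_2))}}{\kappa^2}=-(\rho^2-\rho^1)\int_z y\,(v\cdot n)\,dS .
\end{equation*}

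\emph{Step 2: I would recognise a time derivative.} Parametrising $z=(x,f(x,t))$, the tangential term fixed in deriving \eqref{IIeqv2} is the one for which $\pat z_1=0$, hence $\pat f=v_2-\pax f\,v_1=\sqrt{1+(\pax f)^2}\,(v\cdot n)$ on the interface (unambiguous, since $v\cdot n$ is the same from both sides). Consequently
\begin{equation*}
\int_z y\,(v\cdot n)\,dS=\int_\RR f(x,t)\,\pat f(x,t)\,dx=\tfrac12\frac{d}{dt}\|f(t)\|^2_{L^2(\RR)} .
\end{equation*}
Inserting this into Step 1 produces a closed differential identity for $\|f(t)\|^2_{L^2(\RR)}$; integrating it on $[0,t]$ yields \eqref{IIEB2}, the sign hypothesis $\rho^2>\rho^1$ being exactly what makes the dissipative term appear with the correct sign.

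\emph{The main obstacle.} The two steps above are formal, and the real work is to make them rigorous: (i) to justify the integrations by parts over the unbounded strip by truncating in $x$ and exploiting the decay of $v$ (from the Biot--Savart kernel) and of $f$, so that the lateral terms genuinely vanish; and, more delicately, (ii) to establish the interface cancellations, i.e. the continuity of $p$ and of $v\cdot n$ across both $z$ and $h$, together with the no-flux condition on the walls read off from \eqref{IIBSconf}. Item (ii) is where the argument could go wrong; once it is settled for the smooth solution at hand, the remainder is bookkeeping.
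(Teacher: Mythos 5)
Your proposal is correct and takes essentially the same route as the paper: the paper simply packages $p^i+\rho_i y$ into potentials $\phi^i$ with $v=-\nabla\phi^i$ on each $\Omega^i$ and integrates $\Delta\phi^i\,\phi^i$ by parts, which is exactly your pairing of Darcy's law with $v$, and it invokes the same cancellations (continuity of $p$ and $v\cdot n$ across $z$ and $h$, no-flux at the walls) before identifying the remaining interface term with $\tfrac12\frac{d}{dt}\|f(t)\|^2_{L^2}$ via $\pat f=v\cdot(-\pax f,1)$.
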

\begin{proof}
We define the potentials 
$$
\phi^1(x,y,t)=\kappa^1(p(x,y,t)+\rho^1y),\;\;\text{if }(x,y)\in\Omega^1,
$$  
$$
\phi^2(x,y,t)=\kappa^1(p(x,y,t)+\rho^2y),\;\;\text{if }(x,y)\in\Omega^2,
$$  
$$
\phi^3(x,y,t)=\kappa^2(p(x,y,t)+\rho^2y),\;\;\text{if }(x,y)\in\Omega^3.
$$  
We have $v^i=-\nabla\phi^i$ in each subdomain $S^i$. Since the velocity is incompressible we have
$$
0=\int_{\Omega^i}\Delta\phi^i\phi^i dxdy=-\int_{\Omega^i}|v^i|^2dxdy+\int_{\partial \Omega^i}\phi^i\partial_n\phi^i ds.
$$
Moreover, the normal component of the velocity is continuous through the interface $(x,f(x))$ and the line where permeability changes $(x,-h_2)$. Using the impermeable boundary conditions, we only need to integrate over the curve $(x,f(x,t))$ and $(x,-h_2)$. Indeed, we have
\begin{equation}\label{IIMP1}
0=-\int_{\Omega^1}|v^1|^2dxdy+\kappa^1\int_\RR (p(x,f(x,t),t)+\rho^1f(x,t))(-v(x,f(x,t),t)\cdot(\pax f(x,t),-1))dx,
\end{equation}
\begin{multline}\label{IIMP2}
0=-\int_{\Omega^2}|v^2|^2dxdy+\kappa^1\int_\RR (p(x,f(x,t),t)+\rho^2f(x,t))(-v(x,f(x,t),t)\cdot(-\pax f(x,t),1))dx\\
+\kappa^1\int_\RR (p(x,-h_2,t)-\rho^2h_2)(-v(x,-h_2,t)\cdot(0,-1))dx,
\end{multline}
\begin{equation}\label{IIMP3}
0=-\int_{\Omega^3}|v^3|^2dxdy+\kappa^2\int_\RR (p(x,-h_2,t)-\rho^2h_2)(-v(x,-h_2,t)\cdot(0,1))dx.
\end{equation}
Inserting \eqref{IIMP3} in \eqref{IIMP2} we get
\begin{multline}\label{IIMP2b}
0=-\int_{\Omega^2}|v^2|^2dxdy-\frac{\kappa^1}{\kappa^ 2}\int_{\Omega^3}|v^3|^2dxdy\\
+\kappa^1\int_\RR (p(x,f(x,t),t)+\rho^2f(x,t))(-v(x,f(x,t),t)\cdot(-\pax f(x,t),1))dx,
\end{multline}
Thus, summing \eqref{IIMP2b} and \eqref{IIMP1} together and using the continuity of the pressure and the velocity in the normal direction, we obtain
\begin{equation}\label{IIEB}
\int_{\Omega^ 1\cup \Omega^2}|v|^2dxdy+\frac{\kappa^1}{\kappa^ 2}\int_{\Omega^3}|v|^2dxdy=\kappa^1\int_\RR (\rho^2-\rho^1)f(x,t)(-\pat f(x,t))dx.
\end{equation}
Integrating in time we get the desired result \eqref{IIEB2}.
\end{proof}

\subsection{Well-posedness for the infinite depth case}
Let $\Omega$ be the spatial domain considered, \emph{i.e.} $\Omega=\RR$ or $\Omega=\TT$. In this section we prove the short time existence of classical solution for both spatial domains. We have the following result:
\begin{teo}
Consider $0<h_2$ a fixed constant and the initial datum $f_0(x)=f(x,0)\in H^k(\Omega)$, $k\geq3$, such that $-h_2<\min_x f_0(x)$. Then, if the Rayleigh-Taylor condition is satisfied, \emph{i.e.} $\rho^2-\rho^1>0$, there exists an unique classical solution of \eqref{IIeq9} $f\in C([0,T],H^k(\Omega))$  where $T=T(f_0)$. Moreover, we have $f\in C^1([0,T],C(\Omega))\cap C([0,T],C^2(\Omega)).$
\label{IIteo1}
\end{teo}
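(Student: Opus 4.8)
The plan is to prove Theorem~\ref{IIteo1} by the classical energy method, exactly in the spirit of the homogeneous problem treated in \cite{c-g07,c-g09}: establish a priori estimates in $H^k(\Omega)$, build approximate solutions by regularising the equation, pass to the limit by compactness, and close with a uniqueness/time-regularity argument. Two quantities must be propagated along the flow: the norm $\|f(t)\|_{H^k(\Omega)}$ and the distance $m(t):=\min_x\bigl(f(x,t)+h_2\bigr)$ of the interface from the line $\{y=-h_2\}$ where the permeability jumps. The latter is needed because, while the first integral in \eqref{IIeq9} is the usual Muskat operator whose denominator $(x-\beta)^2+(f(x)-f(\beta))^2$ stays $\gtrsim (x-\beta)^2$ for a graph once $\|\pax f\|_{L^\infty}$ is controlled, the kernels coming from the inhomogeneity degenerate only if $f+h_2$ vanishes. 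By hypothesis $m(0)>0$, and since $|m(t)-m(0)|\le\int_0^t\|\pat f\|_{L^\infty}\,ds$ with $\|\pat f\|_{L^\infty}\le C(\|f\|_{H^k},m^{-1})$ (a consequence of the estimates below), the bound $m(t)\ge m(0)/2$ propagates on a short time interval; so we may and will work on a set where $f+h_2\ge\delta>0$.

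The core is the differential inequality $\tfrac{d}{dt}\|f\|_{H^k}^2\le C(\|f\|_{H^k},\delta^{-1})$. The $L^2$ part is routine. For the top order, apply $\pax^k$ to \eqref{IIeq9}, multiply by $\pax^k f$ and integrate over $\Omega$. In the first integral — the homogeneous Muskat operator — all contributions that are not directly bounded by $C(\|f\|_{H^k})\|f\|_{H^k}$ are collected, as in \cite{c-g07}, into a leading part which, after pairing with $\pax^k f$, produces a dissipative term $-c_0\|\Lambda^{1/2}\pax^k f\|_{L^2}^2$ with $c_0=c_0(\kappa^1,\rho^2-\rho^1)>0$, together with commutator terms bounded by $C(\|f\|_{H^k})\|f\|_{H^k}^2$ once a half-derivative is absorbed into this dissipation. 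It is precisely here that the Rayleigh--Taylor condition $\rho^2-\rho^1>0$ enters, fixing the favourable sign $c_0>0$. The second integral of \eqref{IIeq9}, together with \eqref{IIeq10A}--\eqref{IIeq10} for $\varpi_2$, is genuinely lower order: since $f+h_2\ge\delta>0$, the kernels $\frac{1}{(x-\beta)^2+(f(x)+h_2)^2}$ and $\frac{h_2+f(\beta)}{(x-\beta)^2+(h_2+f(\beta))^2}$, and all their $x$-derivatives, are smooth and integrable, so $\|\varpi_2\|_{H^{k-1}}\le C(\|f\|_{H^k},\delta^{-1})$ and the whole second term of \eqref{IIeq9} is bounded in $H^k$ by $C(\|f\|_{H^k},\delta^{-1})$, with no contribution to the top-order balance. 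Combining everything gives the claimed inequality, hence an a priori bound on $\|f(t)\|_{H^k}$ on some $[0,T]$ with $T=T(\|f_0\|_{H^k},m(0))$.

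To produce an actual solution, regularise \eqref{IIeq9} by mollifying, e.g. replacing $f$ by $\jeps f$ inside the nonlinear terms (and, if convenient, adding $\epsilon\pax^2 f$); the regularised problem is an ODE in $H^k(\Omega)$ with locally Lipschitz right-hand side, hence has a unique local solution $f^\epsilon$, and the energy estimate above holds uniformly in $\epsilon$ on a common interval $[0,T]$. By Aubin--Lions one extracts a subsequence converging in $C([0,T],H^{k-1})$ and weakly-$*$ in $L^\infty(0,T;H^k)$ to a solution $f$ of \eqref{IIeq9}, and $f\in C([0,T],H^k)$ follows from weak continuity together with continuity of the norm. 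For uniqueness, if $f_1,f_2$ solve \eqref{IIeq9} with the same datum, then $g=f_1-f_2$ satisfies, after subtracting the equations, $\tfrac{d}{dt}\|g\|_{L^2}^2\le -c\|\Lambda^{1/2}g\|_{L^2}^2+C\|g\|_{L^2}^2$ with $c,C$ depending on $\sup_t\|f_i\|_{H^k}$ and on $\delta$, so $g\equiv0$ by Gr\"onwall. Finally, reading $\pat f$ directly off \eqref{IIeq9} gives $\pat f\in C([0,T],H^{k-1})$, whence $f\in C^1([0,T],H^{k-1})\cap C([0,T],H^k)\hookrightarrow C^1([0,T],C(\Omega))\cap C([0,T],C^2(\Omega))$ for $k\ge3$. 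The main obstacle is, as in the homogeneous case, the bookkeeping of the top-order commutators for the singular first integral — extracting the dissipative operator with the right sign, and handling the principal value and the slow ($1/|x-\beta|$) decay of some kernels (harmless on $\TT$, treated via the Hilbert-transform structure on $\RR$) — for which we follow \cite{c-g07,c-g09}; the terms genuinely new to the inhomogeneous setting, namely those carried by $\varpi_2$, are by contrast easy, precisely because their kernels are non-singular under the standing assumption $f+h_2>0$.
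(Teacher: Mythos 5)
Your proposal is correct and follows essentially the same route as the paper: an $H^k$ energy estimate coupled with propagation of the distance of the graph to the line $\{y=-h_2\}$ (the paper folds the reciprocal quantity $\|d^h[f]\|_{L^\infty}$ into its energy functional rather than bootstrapping $m(t)$, but the mechanism is identical), with the homogeneous Muskat operator handled as in \cite{c-g07} under the Rayleigh--Taylor condition and the $\varpi_2$ terms shown to be non-singular thanks to $f+h_2\ge\delta>0$, followed by regularization, compactness, and an $L^2$ contraction for uniqueness. The only cosmetic difference is that the paper uses the representation \eqref{IIeq10.b} to show $\varpi_2$ is at the same regularity level as $f$ itself, whereas you settle for $H^{k-1}$ and absorb the loss by moving a derivative onto the kernel, which works equally well.
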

\begin{proof}
We prove the result in the case $\Omega=\RR$, being the case $\Omega=\TT$ similar. Let us consider the usual Sobolev space $H^3(\RR)$ endowed with the norm
$$
\|f\|_{H^3}=\|f\|_{L^2}+\|\Lambda^3 f\|_{L^2},
$$
where $\Lambda=\sqrt{-\Delta}$. Define the energy
\begin{equation}\label{IIeq15}
 E[f]:=\|f\|_{H^3}+\|d^h[f]\|_{L^\infty},
\end{equation}
with
\begin{equation}\label{IIeq16}
d^h[f](x,\beta)=\frac{1}{(x-\beta)^2+(f(x)+h_2)^2}.
\end{equation}
To use the classical energy method we need \emph{a priori} estimates. To simplify notation we drop the physical parameters present in the problem by considering $\kappa^1(\rho^2-\rho^1)=2\pi$ and $\mathcal{K}=\frac{1}{2}$. The sign of the difference between the permeabilities will not be important to obtain local existence. We denote $c$ a constant that can changes from one line to another. 

\textbf{Estimates on $\|\varpi_2\|_{H^3}$:} Given $f(x)$ such that $E[f]<\infty$ we consider $\varpi_2$ as defined in \eqref{IIeq10}. Then we have that $\|\varpi_2\|_{H^3}\leq c(E[f]+1)^k$ for some constants $c,k$.  

We proceed now to prove this claim. We start with the $L^2$ norm. Changing variables in \eqref{IIeq10} we have
\begin{multline*}
\|\varpi_2\|^2_{L^2}\leq c\left\|\text{P.V.}\int_{B(0,1)}\frac{\pax f(x-\beta)(h_2+f(x-\beta))}{\beta^2+(h_2+f(x-\beta))^2}d\beta\right\|_{L^2}^2\\
+c\left\|\text{P.V.}\int_{B^c(0,1)}\frac{\pax f(x-\beta)(h_2+f(x-\beta))}{\beta^2+(h_2+f(x-\beta))^2}d\beta\right\|_{L^2}^2\\
=A_1+A_2.\qquad\qquad\qquad\qquad
\end{multline*} 
The inner term, $A_1,$ can be bounded as follows
\begin{multline*}
A_1=\int_\RR\text{P.V.}\int_{B(0,1)}\frac{\pax f(x-\beta)(h_2+f(x-\beta))}{\beta^2+(h_2+f(x-\beta))^2}d\beta dx\\
\times\text{P.V.}\int_{B(0,1)}\frac{\pax f(x-\xi)(h_2+f(x-\xi))}{\xi^2+(h_2+f(x-\xi))^2}d\xi dx\\
\leq c\|d^h[f]\|_{L^\infty}^2(1+\|f\|_{L^\infty})^2\|\pax f\|_{L^2}^2.
\end{multline*}
In the last inequality we have used Cauchy-Schwartz inequality and Tonelli's Theorem. For the outer part we have
\begin{multline*}
A_2=\int_\RR\text{P.V.}\int_{B^c(0,1)}\frac{\pax f(x-\beta)(h_2+f(x-\beta))}{\beta^2+(h_2+f(x-\beta))^2}d\beta dx\\
\times\text{P.V.}\int_{B^c(0,1)}\frac{\pax f(x-\xi)(h_2+f(x-\xi))}{\xi^2+(h_2+f(x-\xi))^2}d\xi dx\\
\leq c(1+\|f\|_{L^\infty})^2\|\pax f\|_{L^2}^2,
\end{multline*}
where we have used that $\int_{1}^\infty\frac{d\beta}{\beta^2}<\infty$ and Cauchy-Schwartz inequality. We change variables in \eqref{IIeq10.b} to obtain
$$
\varpi_2(x)=\text{P.V.}\int_\RR \frac{\beta}{\beta^2+(h_2+f(x-\beta))^2}d\beta.
$$
Now it is clear that $\varpi_2$ is at the level of $f$ in terms of regularity and the inequality follows using the same techniques. Using Sobolev embedding we conclude this step.

\textbf{Estimates on $\|d^h[f]\|_{L^\infty}$:} The first integral in \eqref{IIeq9} can be bounded as follows
$$
I_1\leq\left\|\text{P.V.}\int_\RR\frac{(x-\beta)(\pax f(x)-\pax f(\beta))}{(x-\beta)^2+(f(x)-f(\beta))^2}d\beta\right\|_{L^\infty}\leq c(E[f]+1)^k,
$$
for some positive and finite $k$. The new term is the second integral in \eqref{IIeq9}. 
\begin{multline*}
I_2\leq\left\|\frac{1}{2\pi}\text{P.V.}\int_\RR\frac{\varpi_2(x-\beta)(\beta+\pax f(x)(f(x)+h_2))}{\beta^2+(f(x)+h_2)^2}d\beta\right\|_{L^\infty}\\
\leq \left\|\frac{1}{2\pi}\text{P.V.}\int_{B(0,1)}d\beta\right\|_{L^\infty} +\left\|\frac{1}{2\pi}\text{P.V.}\int_{B^c(0,1)}d\beta\right\|_{L^\infty}= A_1+A_2. 
\end{multline*}
Easily we have
$$
A_1\leq c\|\varpi_2\|_{L^\infty}\|d^h[f]\|_{L^\infty}(1+\|\pax f\|_{L^\infty}(\|f\|_{L^\infty}+1)).
$$
We split $A_2=B_1+B_2$
\begin{multline*}
B_1=\frac{1}{2\pi}\text{P.V.}\int_{B^c(0,1)}\frac{\varpi_2(x-\beta)\beta}{\beta^2+(f(x)+h_2)^2}\pm\frac{\varpi_2(x-\beta)\beta}{\beta^2}d\beta\\
\leq c\|\varpi_2\|_{L^\infty}(\|f\|_{L^\infty}+1)^2+c\|H\varpi_2\|_{L^\infty}+c\|\pax \varpi_2\|_{L^\infty},
\end{multline*}
where $H$ denotes the Hilbert transform. Now we conclude the desired bound using the previous estimate on $\|\varpi_2\|_{H^3}$ and Sobolev embedding. The second term can be bounded as
$$
B_2=\frac{1}{2\pi}\text{P.V.}\int_{B^c(0,1)}\frac{\varpi_2(x-\beta)\pax f(x)(f(x)+h_2)}{\beta^2+(f(x)+h_2)^2}d\beta\leq c\|\varpi_2\|_{L^\infty}(\|f\|_{L^\infty}+1)\|\pax f\|_{L^\infty}.
$$
We obtain the following useful estimate
\begin{equation}\label{IIdtf}
\|\pat f\|_{L^\infty}\leq c(E[f]+1)^k. 
\end{equation}
We have
$$
\frac{d}{dt}d^h[f]=\frac{-\pat f(x)2(f(x)+h_2)}{(\beta^2+(f(x)+h_2)^2)^2}\leq cd^h[f]\|d^h[f]\|_{L^\infty}(\|f\|_{L^\infty}+1)\|\pat f\|_{L^\infty}.
$$
Thus, integrating in time and using \eqref{IIdtf},
$$
\|d^h[f](t+h)\|_{L^\infty}\leq\|d^h[f](t)\|_{L^\infty}e^{c\int_t^{t+h}(E[f]+1)^k},
$$
and we conclude this step
$$
\frac{d}{dt}\|d^h[f]\|_{L^\infty}=\lim_{h\rightarrow0}\frac{\|d^h[f](t+h)\|_{L^\infty}-\|d^h[f](t)\|_{L^\infty}}{h}\leq c(E[f]+1)^k.
$$

\textbf{Estimates on $\|\pax^3 f\|_{L^2}$:} As before, the bound for the term coming from the first integral in \eqref{IIeq9} can be obtained as in \cite{c-g07}, so it only remains the term coming from the second integral. We have
$$
I_2=\frac{1}{2\pi}\int_\RR \pax^3 f(x)\text{P.V.}\int_{\RR}\pax^3\left(\frac{\varpi_2(x-\beta)(\beta+\pax f(x)(f(x)+h_2))}{\beta^2+(f(x)+h_2)^2}\right)d\beta dx.
$$
For the sake of brevity we only bound the terms with higher order, being the remaining terms analogous. We have
$$
I_2=J_3+J_4+J_5+J_6+J_7+\text{l.o.t.},
$$
with
$$
J_3=\frac{1}{2\pi}\int_\RR \pax^3 f(x)\text{P.V.}\int_{\RR}\frac{\pax^3\varpi_2(x-\beta)\beta}{\beta^2+(f(x)+h_2)^2}d\beta dx,
$$
$$
J_4=\frac{1}{2\pi}\int_\RR \pax^3 f(x)\text{P.V.}\int_{\RR}\frac{\pax^3\varpi_2(x-\beta)\pax f(x)(f(x)+h_2)}{\beta^2+(f(x)+h_2)^2}d\beta dx,
$$
$$
J_5=\frac{1}{2\pi}\int_\RR \pax^3 f(x)\text{P.V.}\int_{\RR}\frac{2\varpi_2(x-\beta)(\beta+\pax f(x)(f(x)+h_2))(-f(x)-h_2)\pax^3 f(x)}{(\beta^2+(f(x)+h_2)^2)^2}d\beta dx,
$$
$$
J_6=\frac{1}{2\pi}\int_\RR \pax^3 f(x)\text{P.V.}\int_{\RR}\frac{\varpi_2(x-\beta)(f(x)+h_2)\pax^4 f(x)}{\beta^2+(f(x)+h_2)^2}d\beta dx,
$$
and
$$
J_7=\frac{1}{2\pi}\int_\RR \pax^3 f(x)\text{P.V.}\int_{\RR}\frac{4\varpi_2(x-\beta)\pax f(x)\pax^3 f(x)}{\beta^2+(f(x)+h_2)^2}d\beta dx.
$$
In order to bound $J_3$ we use the symmetries in the formulae ($\pax=-\partial_\beta$) and we integrate by parts:
\begin{multline*}
J_3=\frac{1}{2\pi}\int_\RR \pax^3 f(x)\text{P.V.}\int_{\RR}\pax^2\varpi_2(x-\beta)\partial_\beta\left(\frac{\beta}{\beta^2+(f(x)+h_2)^2}\right)d\beta dx\\
\leq c\|\pax^3 f\|_{L^2}\|\pax^2\varpi_2\|_{L^2}(\|d^h[f]\|_{L^\infty}^2+\|d^h[f]\|_{L^\infty}+1).\hspace{4cm}
\end{multline*}
In $J_4$ we use Cauchy-Schwartz inequality to obtain
$$
J_4\leq c(\|d^h[f]\|_{L^\infty}+1)\|\pax^3 f\|_{L^2}\|\pax^3 \varpi_2\|_{L^2}\|\pax f\|_{L^\infty}(\|f\|_{L^\infty}+h_2)
$$
The bounds for $J_5$ and $J_7$ are similar:
$$
J_5\leq c(\|d^h[f]\|_{L^\infty}^2+1)\|\pax^3 f\|^2_{L^2}\|\varpi_2\|_{L^\infty}(1+\|\pax f\|_{L^\infty}(\|f\|_{L^\infty}+h_2))(\|f\|_{L^\infty}+h_2),
$$ 
$$
J_7\leq c(\|d^h[f]\|_{L^\infty}+1)\|\pax^3 f\|^2_{L^2}\|\varpi_2\|_{L^\infty}\|\pax f\|_{L^\infty}.
$$ 
Finally, we integrate by parts in $J_6$ and we get
\begin{multline*}
J_6\leq c\|\pax^3 f\|^2_{L^2}(\|d^h[f]\|_{L^\infty}+1)\left(\|\pax\varpi_2\|_{L^\infty}(\|f\|_{L^\infty}+1)+\|\varpi_2\|_{L^\infty}\|\pax f\|_{L^\infty}\right)\\
+c\|\pax^3 f\|^2_{L^2}(\|d^h[f]\|_{L^\infty}^2+1)\|\varpi_2\|_{L^\infty}\|\pax f\|_{L^\infty}(\|f\|_{L^\infty}+1)^2.
\end{multline*}
As a conclusion, we obtain
$$
\frac{d}{dt}\|\pax^3 f\|_{L^2}\leq c(E[f]+1)^k.
$$
Putting all the estimates together we get the desired bound for the energy:
\begin{equation}\label{IIenergybound}
\frac{d}{dt}E[f]\leq c(E[f]+1)^k.
\end{equation}

\textbf{Regularization:} This step is classical, so, we only sketch this part (see \cite{bertozzi-Majda} for the details). We regularize the problem and we show that the regularized problems have a solution using Picard's Theorem on a ball in $H^3$. Using the previous energy estimates and the fact that the initial energy is finite, these solutions have the same time of existence ($T$ depending only on the initial datum) and we can show that they are a Cauchy sequence in $C([0,T],L^2)$. From here we obtain $f\in C([0,T],H^s(\Omega))\cap L^\infty([0,T],H^3(\Omega))$  where $T=T(f_0)$ and $0<s<3$, a solution to \eqref{IIeq9} as the limit of these regularized solutions. The continuity of the strongest norm $H^3$ for positive times follows from the parabolic character of the equation. The continuity of $\|f(t)\|_{H^3}$ at $t=0$ follows from the fact that $f(t)\rightharpoonup f_0$ in $H^3$ and from the energy estimates.

\textbf{Uniqueness:} Only remains to show that the solution is unique. Let us suppose that for the same initial datum $f_0$ there are two smooth solutions $f^1$ and $f^2$ with finite energy as defined in \eqref{IIeq15} and consider $f=f^1-f^2$. Following the same ideas as in the energy estimates we obtain
$$
\frac{d}{dt}\|f\|_{L^2}\leq c(f_0,E[f^1],E[f^2])\|f\|_{L^2}.
$$ 
Now we conclude using Gronwall inequality.
\end{proof}

\subsection{Well-posedness for the finite depth case}
In this section we prove the short time existence of classical solution in the case where the depth is finite. We have the following result:
\begin{teo}
Consider $0<h_2<\pi/2$ a constant and $f_0(x)=f(x,0)\in H^k(\RR)$, $k\geq3$, an initial datum such that $\|f_0\|_{L^\infty}<\pi/2$ and $-h_2<\min_x f_0(x)$. Then, if the Rayleigh-Taylor condition is satisfied, \emph{i.e.} $\rho^2-\rho^1>0$, there exists an unique classical solution of \eqref{IIeqv2} $f\in C([0,T],H^k(\RR))$  where $T=T(f_0)$. Moreover, we have $f\in C^1([0,T],C(\RR))\cap C([0,T],C^2(\RR)).$
\label{IIteo3}
\end{teo}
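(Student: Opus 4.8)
The plan is to run the same energy method as for Theorem \ref{IIteo1}, adapting the \emph{a priori} estimates to the finite-depth kernels of \eqref{IIeqv2} and to the Fredholm-type representation \eqref{IIw2defc} of $\varpi_2$. As before we normalise $\kappa^1(\rho^2-\rho^1)=4\pi$, and we work in the range $|\mathcal{K}|<\delta(h_2)$ with $\delta$ as in \eqref{IIdelta} so that $\varpi_2$ is given by \eqref{IIw2defc} with $G_{h_2,\mathcal{K}}$ a fixed Schwartz function (this covers every $\mathcal{K}$ when $h_2=\pi/4$, cf.\ Remark 3). The obstruction to well-posedness is now geometric in two ways: the kernels with a ``difference'' structure (the first integral in \eqref{IIeqv2}, and the first/third pieces of \eqref{IIw2defc}) only lose their singularity at $x=\beta$ when $\min_x f(x)>-h_2$, just as in the infinite-depth case, while the kernels with a ``sum'' structure (denominators $\cosh(x-\beta)+\cos(\cdot)$) are uniformly non-singular precisely because $\|f\|_{L^\infty}<\pi/2$ and $h_2<\pi/2$. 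Accordingly I would work with
\begin{equation*}
E[f]:=\|f\|_{H^3}+\|d^h[f]\|_{L^\infty}+\|d^{bd}[f]\|_{L^\infty},\qquad d^h[f](x,\beta)=\frac{1}{\cosh(x-\beta)-\cos(f(x)+h_2)},\quad d^{bd}[f](x)=\frac{1}{\pi/2-|f(x)|},
\end{equation*}
and run a continuation argument: as long as $E[f(t)]<\infty$ the solution persists, and since $E$ is finite at $t=0$ (both $\|f_0\|_{L^\infty}<\pi/2$ and $-h_2<\min f_0$ being strict) it remains finite on a short interval.

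The first step is $\|\varpi_2\|_{H^3}\le c(E[f]+1)^k$. Each of the four integrals in \eqref{IIw2defc} has, after splitting the $\beta$-integral into $B(0,1)$ and its complement, the same structure as the infinite-depth second vorticity: on $B(0,1)$ one uses Cauchy--Schwarz and Tonelli together with $\|d^h[f]\|_{L^\infty}$, $\|f\|_{L^\infty}$ and $\|\pax f\|_{L^2}$ exactly as in the proof of Theorem \ref{IIteo1}, and on $B^c(0,1)$ the kernels are bounded and one uses the decay of $\sinh/\cosh$ at infinity. The only new feature is the convolution against $G_{h_2,\mathcal{K}}$; since $G_{h_2,\mathcal{K}}\in\mathcal{S}$, convolution with it is bounded on every $H^s$ and every $L^p$, so those two terms cost nothing beyond the first two. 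Differentiating up to three times and using Sobolev embedding closes this step, and one also records $\|\varpi_2\|_{L^\infty}+\|\pax\varpi_2\|_{L^\infty}+\|\pax^2\varpi_2\|_{L^\infty}\le c(E[f]+1)^k$.

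Next I would control the time derivatives of the sup-norm quantities. The bound $\|\pat f\|_{L^\infty}\le c(E[f]+1)^k$, the analogue of \eqref{IIdtf}, follows from \eqref{IIeqv2} by the same $B(0,1)$/$B^c(0,1)$ splitting (the outer pieces producing Hilbert-transform and $\pax\varpi_2$ terms absorbed by the previous step), now using both $d^h[f]$ and $d^{bd}[f]$ to handle the two kinds of denominator. Then $\frac{d}{dt}d^h[f]=-\frac{\sin(f(x)+h_2)\pat f(x)}{(\cosh(x-\beta)-\cos(f(x)+h_2))^2}$ gives $\frac{d}{dt}\|d^h[f]\|_{L^\infty}\le c(E[f]+1)^k$ as in Theorem \ref{IIteo1}, and $\frac{d}{dt}\|d^{bd}[f]\|_{L^\infty}\le \|\pat f\|_{L^\infty}\|d^{bd}[f]\|_{L^\infty}^2\le c(E[f]+1)^k$. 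The core step is the estimate on $\|\pax^3 f\|_{L^2}$: apply $\pax^3$ to \eqref{IIeqv2}, multiply by $\pax^3 f$ and integrate. The first integral is handled exactly as in \cite{c-g07} (its principal part being a negative, parabolic operator on $\pax^3 f$ after commuting derivatives); the second integral has a uniformly non-singular denominator and is lower order; the third and fourth, carrying $\varpi_2$, are treated exactly like the term $I_2$ in the proof of Theorem \ref{IIteo1}, the sole fourth-order term in $f$ being integrated by parts in $\beta$ (using $\pax=-\partial_\beta$ on the kernel) and then bounded by $c\|\pax^3 f\|_{L^2}^2(E[f]+1)^k$ together with the $\|\varpi_2\|_{H^3}$ bound. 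Adding everything yields $\frac{d}{dt}E[f]\le c(E[f]+1)^k$, the analogue of \eqref{IIenergybound}.

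From here the argument is verbatim that of Theorem \ref{IIteo1}: regularise \eqref{IIeqv2} with a mollifier (truncating if necessary so that the regularised curve stays in the admissible region), solve the resulting ODE in a ball of $H^3$ by Picard, use the uniform energy bound to obtain a common existence time $T=T(f_0)$, show the family is Cauchy in $C([0,T],L^2)$, and pass to the limit to get $f\in C([0,T],H^s)\cap L^\infty([0,T],H^3)$ for $s<3$; continuity in $H^3$ for $t>0$ comes from parabolic smoothing and at $t=0$ from weak continuity plus the energy estimate, and $f\in C^1([0,T],C(\RR))\cap C([0,T],C^2(\RR))$ then follows from $H^3\hookrightarrow C^2$ and the equation. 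Persistence of $H^k$, $k\ge 3$, is standard by differentiating $k-3$ more times and repeating the estimate, and uniqueness follows from $\frac{d}{dt}\|f^1-f^2\|_{L^2}\le c(f_0,E[f^1],E[f^2])\|f^1-f^2\|_{L^2}$ and Grönwall. I expect the main obstacle to be the bookkeeping in the $\|\pax^3 f\|_{L^2}$ estimate: once the $\varpi_2$ bound is granted, one must check that \emph{every} term genuinely closes with at most $\|\pax^3 f\|_{L^2}^2$ and no uncontrolled $\|\pax^4 f\|_{L^2}$, which rests on the same cancellation/integration-by-parts structure as in the known Muskat proofs but has to be verified for each of the $\cosh\mp\cos$ kernels and for the Schwartz-convolution contribution to $\varpi_2$.
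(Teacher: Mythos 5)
Your proposal follows essentially the same route as the paper: the energy $\|f\|_{H^3}$ plus sup-norms of the singular-kernel reciprocals, the $B(0,1)/B^c(0,1)$ splitting with Tonelli/Cauchy--Schwarz and Young's inequality for the $G_{h_2,\mathcal{K}}$ convolution to get $\|\varpi_2\|_{H^3}\le c(E[f]+1)^k$, the $\|\pat f\|_{L^\infty}$ bound feeding the evolution of the distance functionals, integration by parts on the single fourth-order term in the $\|\pax^3 f\|_{L^2}$ estimate, and the standard regularization/Picard/uniqueness scheme. The only (cosmetic) difference is that you measure the distance to the boundaries by the pointwise quantity $1/(\pi/2-|f(x)|)$ whereas the paper tracks the kernel quantity $d[f](x,\beta)=\bigl(\cosh(x-\beta)+\cos(f(x)+f(\beta))\bigr)^{-1}$ directly; the two are comparable and the argument closes either way.
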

\begin{proof}
Let us consider the usual Sobolev space $H^3(\RR)$, being the other cases analogous, and define the energy
\begin{equation}\label{IIeq23}
 E[f]=\|f\|_{H^3}+\|d^h[f]\|_{L^\infty}+\|d[f]\|_{L^\infty},
\end{equation}
with
\begin{equation}\label{IIeq24}
d^h[f](x,\beta)=\frac{1}{\cosh(x-\beta)-\cos(f(x)+h_2)},
\end{equation}
and
\begin{equation}\label{IIeq25}
d[f](x,\beta)=\frac{1}{\cosh(x-\beta)+\cos(f(x)+f(\beta))}.
\end{equation}
We note that $d^h[f]$ represents the distance between $f$ and $h$ and $d[f]$ the distance between $f$ and the boundaries. To simplify notation we drop the physical parameters present in the problem by considering $\kappa^1(\rho^2-\rho^1)=4\pi$ and $\mathcal{K}=\frac{1}{2}$. Again, the sign of the difference between the permeabilities will not be important to obtain local existence. We write \eqref{IIeqv2} as $\pat f= I_1+I_2+I_3+I_4$, being $I_1,I_2$ the integrals corresponding $\varpi_1$ and $I_3,I_4$ the integrals involving $\varpi_2$. We denote $c$ a constant that can changes from one line to another. 

\textbf{Estimate on $\|\varpi_2\|_{H^3}$:} Given $f(x)$ such that $E[f]<\infty$ and consider $\varpi_2$ as defined in \eqref{IIw2defc}. Then we have that $\|\varpi_2\|_{H^3}\leq c(E[f]+1)^k$. We need to bound $\|J_1\|_{H^3}$ and $\|J_2\|_{H^3}$ with
$$
J_1=\text{P.V.}\int_\RR\pax f(x-\beta)\frac{\sin(h_2+f(x-\beta))}{\cosh(\beta)-\cos(h_2+f(x-\beta))}d\beta
$$
$$
J_2=-\text{P.V.}\int_\RR\pax f(x-\beta)\frac{\sin(-h_2+f(x-\beta))}{\cosh(\beta)+\cos(-h_2+f(x-\beta))}d\beta.
$$
We have
\begin{multline*}
\|J_1\|_{L^2}\leq\left\|\text{P.V.}\int_{B(0,1)}\frac{\pax f(x-\beta)\sin(h_2+f(x-\beta))}{\cosh(\beta)-\cos(h_2+f(x-\beta))}d\beta\right\|_{L^2}\\
+\left\|\text{P.V.}\int_{B^c(0,1)}\frac{\pax f(x-\beta)\sin(h_2+f(x-\beta))}{\cosh(\beta)-\cos(h_2+f(x-\beta))}d\beta\right\|_{L^2}\\
\leq c\|\pax f\|_{L^2}\|d^h[f]\|_{L^\infty}+c\|\pax f\|_{L^2},\hspace{3cm}
\end{multline*}
where we have used Tonelli's Theorem and Cauchy-Schwartz inequality.
Recall that $f-h_2\in \left(-2h_2,\frac{\pi}{2}-h_2\right)$, thus
$$
\frac{1}{\cosh(x-\beta)+\cos(f(x)-h_2)}<\frac{1}{\cosh(x-\beta)-c(h_2)},
$$
and the kernel corresponding to $\varpi_2$ can not be singular and we also obtain
$$
\|J_2\|_{L^2}\leq c\|\pax f\|_{L^2}.
$$
Now, as $G_{h_2,\mathcal{K}}\in\mathcal{S}$, we can use the Young's inequality for the convolution terms obtaining bounds with an universal constant depending on $h_2$ and $\mathcal{K}$. Indeed, we have
$$
\|G_{h_2,\mathcal{K}}*J_i\|_{L^2}\leq c\|J_i\|_{L^2},
$$
and we obtain
$$
\|\varpi_2\|_{L^2}\leq c(E[f]+1)^k.
$$
Now we observe that 
$$
J_1=\text{P.V.}\int_\RR\frac{\sinh(\beta)}{\cosh(\beta)-\cos(h_2+f(x-\beta))}d\beta,\;\;J_2=\text{P.V.}\int_\RR\frac{\sinh(\beta)}{\cosh(\beta)+\cos(-h_2+f(x-\beta))}d\beta,
$$
and we obtain $\|\pax^3 J_i\|_{L^2}\leq c(E[f]+1)^k$. Using Young inequality we conclude
$$
\|\varpi_2\|_{H^3}\leq c(E[f]+1)^k.
$$

\textbf{Estimates on $\|d^h[f]\|_{L^\infty}$ and $\|d[f]\|_{L^\infty}$:} The integrals corresponding to $\varpi_1$ in \eqref{IIeqv2} can be bounded (see \cite{CGO}) as 
$$
|I_1+I_2|\leq c(E[f]+1)^k.
$$
The new terms are the integrals $I_3$ and $I_4$, those involving $\varpi_2$ in \eqref{IIeqv2}. We have, when splitted accordingly to the decay at infinity, 
$$
I_3+I_4=J_3+J_4,
$$
where
\begin{multline*}
|J_3|\leq\left\|\frac{1}{4\pi}\text{P.V.}\int_\RR\frac{\varpi_2(x-\beta)\sinh(\beta)}{\cosh(\beta)-\cos(f(x)+h_2)}-\frac{\varpi_2(x-\beta)\sinh(\beta)}{\cosh(\beta)+\cos(f(x)-h_2)}d\beta\right\|_{L^\infty}\\
\leq c\|\varpi_2\|_{L^\infty}\left(\|d^h[f]\|_{L^\infty}+1\right),
\end{multline*}
and
\begin{multline*}
|J_4|\leq\left\|\frac{1}{4\pi}\text{P.V.}\int_\RR\frac{\varpi_2(x-\beta)\pax f(x)\sin(f(x)+h_2)}{\cosh(\beta)-\cos(f(x)+h_2)}+\frac{\varpi_2(x-\beta)\pax f(x)\sin(f(x)-h_2)}{\cosh(\beta)+\cos(f(x)-h_2)}d\beta\right\|_{L^\infty}\\
\leq c\|\varpi_2\|_{L^\infty}\|\pax f\|_{L^\infty}\left(\|d^h[f]\|_{L^\infty}+1\right).
\end{multline*}
We conclude the following useful estimate
\begin{equation}\label{IIdtfFD}
\|\pat f\|_{L^\infty}\leq c(E[f]+1)^k. 
\end{equation}
We have
$$
\frac{d}{dt}d^h[f]=-\frac{\sin(f(x)+h_2)\pat f(x)}{(\cosh(x-\beta)-\cos(f(x)+h_2))^2}\leq d^h[f]\|d^h[f]\|_{L^\infty}\|\pat f\|_{L^\infty}.
$$
Thus, using \eqref{IIdtfFD} and integrating in time, we obtain the desired bound for $d^h[f]$:
$$
\frac{d}{dt}\|d^h[f]\|_{L^\infty}=\lim_{h\rightarrow0}\frac{\|d^h[f](t+h)\|_{L^\infty}-\|d^h[f](t)\|_{L^\infty}}{h}\leq c(E[f]+1)^k.
$$
To obtain the corresponding bound for $d[f]$ we proceed in the same way and we use \eqref{IIdtfFD} (see \cite{CGO} for the details)

\textbf{Estimates on $\|\pax^3 f\|_{L^2}$:} As before, see \cite{CGO} for the details concerning the terms coming from $\varpi_1$ in \eqref{IIeqv2}. It only remains the terms coming from $\varpi_2$: 
\begin{multline*}
I=\int_\RR \text{P.V.}\int_\RR \pax^3f(x)\pax^3\left(\frac{\varpi_2(\beta)(\sinh(x-\beta)+\pax f(x)\sin(f(x)+h_2))}{\cosh(x-\beta)-\cos(f(x)+h_2)}\right.\\
\left.+\frac{\varpi_2(\beta)(-\sinh(x-\beta)+\pax f(x)\sin(f(x)-h_2))}{\cosh(x-\beta)+\cos(f(x)-h_2)}\right)d\beta dx.
\end{multline*}
We split
$$
I=J_7+J_8+J_9+\text{l.o.t.}.
$$
The lower order terms (l.o.t.) can be obtained in a similar way, so we only study the terms $J_i$. We have
\begin{multline*}
J_7\leq \int_\RR \text{P.V.}\int_\RR \frac{\pax^3 f(x)\pax^3\varpi_2(x-\beta)\sinh(\beta)}{\cosh(\beta)-\cos(f(x)+h_2)}-\frac{\pax^3f(x)\pax^3\varpi_2(x-\beta)\sinh(\beta)}{\cosh(\beta)+\cos(f(x)-h_2)}d\beta dx\\
\leq c\|\pax^3 f\|_{L^2}\|\pax^3 \varpi_2\|_{L^2}(\|d^h[f]+1\|),
\end{multline*}
\begin{multline*}
J_8\leq \int_\RR \text{P.V.}\int_\RR \frac{\pax^3 f(x)\pax^3\varpi_2(x-\beta)\pax f(x)\sin(f(x)+h_2)}{\cosh(\beta)-\cos(f(x)+h_2)}\\
-\frac{\pax^3f(x)\pax^3\varpi_2(x-\beta)\pax f(x)\sin(f(x)+h_2)}{\cosh(\beta)+\cos(f(x)-h_2)}d\beta dx\\
\leq c\|\pax^3 f\|_{L^2}\|\pax^3 \varpi_2\|_{L^2}\|\pax f\|_{L^\infty}(\|d^h[f]+1\|).
\end{multline*}
The term $J_9$ is given by
\begin{multline*}
J_9=\frac{1}{2}\int_\RR \text{P.V.}\int_\RR \pax(\pax^3f(x))^2\left(\frac{\varpi_2(\beta)\sin(f(x)+h_2)}{\cosh(x-\beta)-\cos(f(x)+h_2)}\right.\\
\left.+\frac{\varpi_2(\beta)\sin(f(x)-h_2)}{\cosh(x-\beta)+\cos(f(x)-h_2)}\right)d\beta dx.
\end{multline*}
Integrating by parts
\begin{multline*}
|J_9| \leq c\|\pax^3 f\|_{L^2}(\|d^h[f]\|_{L^\infty}+1)(\|\pax \varpi_2\|_{L^\infty}+\|\varpi_2\|_{L^\infty}\|\pax f\|_{L^\infty})\\
+c\|\pax^3 f\|_{L^2}(\|d^h[f]\|^2_{L^\infty}+1)\|\varpi_2\|_{L^\infty}(1+\|\pax f\|_{L^\infty})
\end{multline*}

\textbf{Regularization and uniqueness:} These steps follow the same lines as in Theorem \ref{IIteo1}. This concludes the result.
\end{proof}

\section{Numerical simulations}\label{IIsec5}
In this section we perform numerical simulations to better understand the role of $\varpi_2$. We consider equation \eqref{IIeq13} where $\kappa^1=1$, $\rho^2-\rho^1=4\pi$ and $h_2=\pi/2$. For each initial datum we approximate the solution of \eqref{IIeq13} corresponding to different $\mathcal{K}$. Indeed, we take different $\kappa^2$ to get $\mathcal{K}=\frac{-999}{1001},\frac{-1}{3},0,\frac{1}{3}$ and $\frac{999}{1001}$.

To perform the simulations we follow the ideas in \cite{c-g-o08}. The interface is approximated using cubic splines with $N$ spatial nodes. The spatial operator is approximated with Lobatto quadrature (using the function \emph{quadl} in Matlab). Then, three different integrals appear for a fixed node $x_i$. The integral between $x_{i-1}$ and $x_i$, the integral between $x_i$ and $x_{i+1}$ and the nonsingular ones. In the two first integrals we use Taylor theorem to remove the zeros present in the integrand. In the nonsingular integrals the integrand is made explicit using the splines. We use a classical explicit Runge-Kutta method of order 4 to integrate in time. In the simulations we take $N=120$ and $dt=10^{-3}$.

The case 1 (see Figure \ref{IIcase1} and \ref{IIcase1b}) approximates the solution corresponding to the initial datum 
$$
f_0(x)=-\left(\frac{\pi}{2}-0.000001\right)e^{-x^{12}}.
$$ 
\begin{figure}[t]
		\begin{center}
		\includegraphics[scale=0.4]{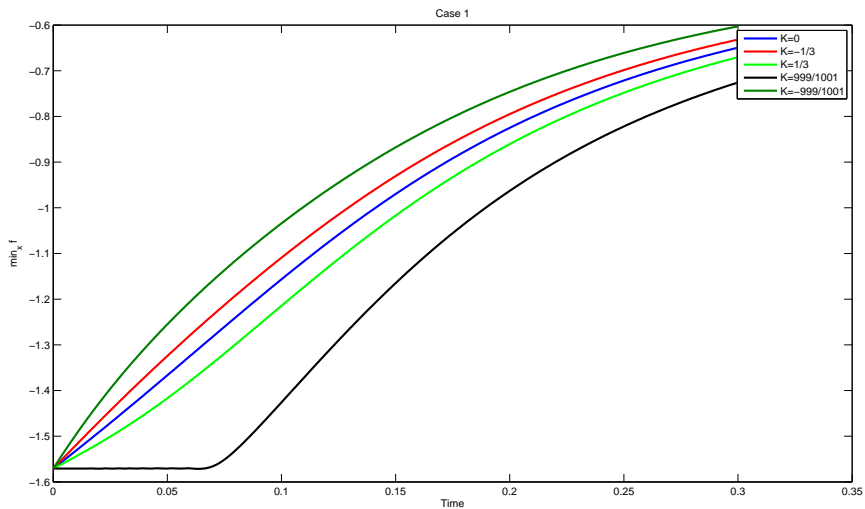} 
		\end{center}
		\caption{Evolution of $-\|f\|_{L^\infty}$ for different $\mathcal{K}$ in case 1.}
\label{IIcase1}
\end{figure}
\begin{figure}[t]
		\begin{center}
		\includegraphics[scale=0.4]{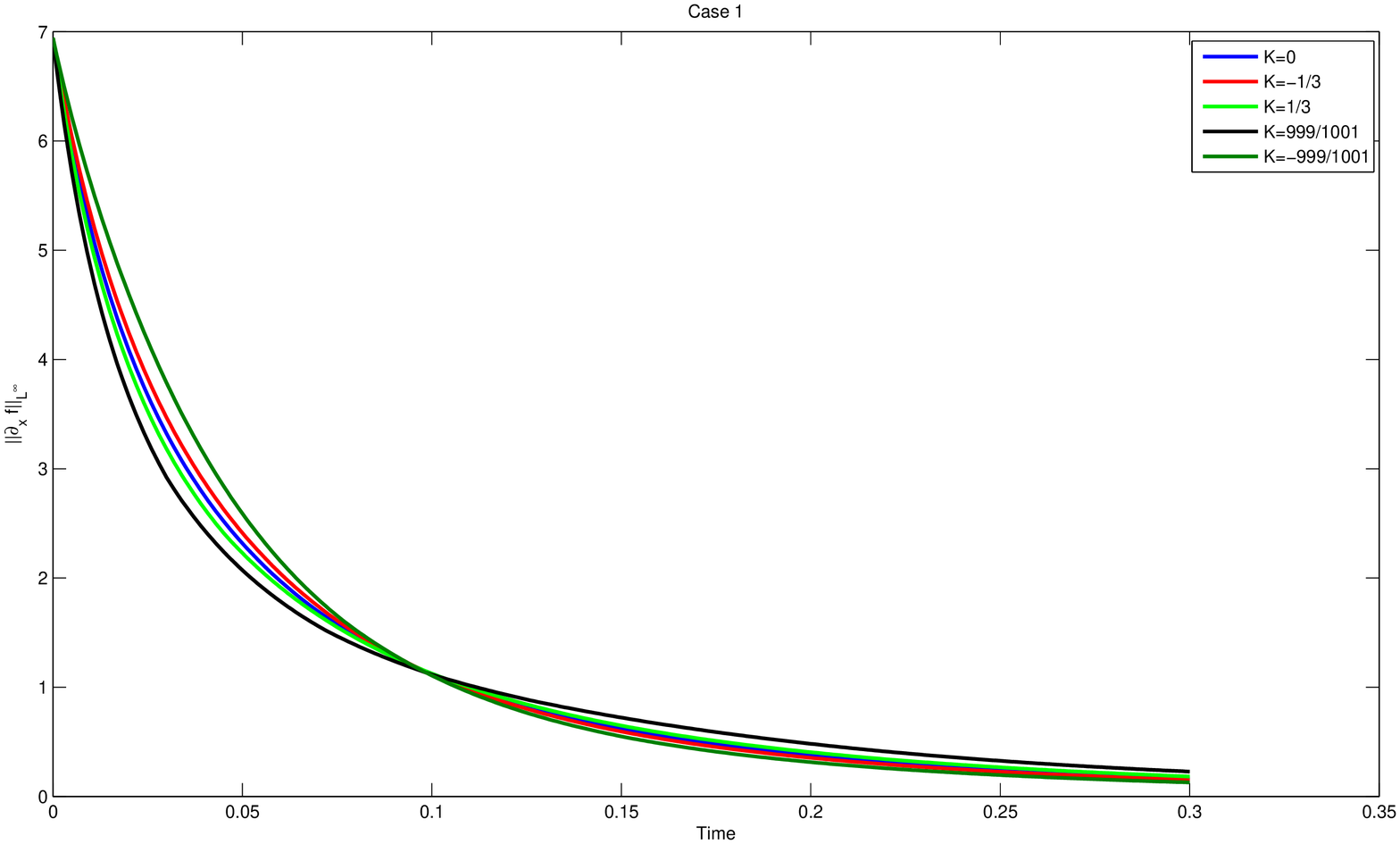} 
		\end{center}
		\caption{Evolution of $\|\pax f\|_{L^\infty}$ for different $\mathcal{K}$ in case 1.}
\label{IIcase1b}
\end{figure}

The case 2 (see Figure \ref{IIcase2} and \ref{IIcase2b}) approximates the solution corresponding to the initial datum 
$$
f_0(x)=-\left(\frac{\pi}{2}-0.000001\right)\cos(x^2).
$$ 
\begin{figure}[t]
		\begin{center}
		\includegraphics[scale=0.4]{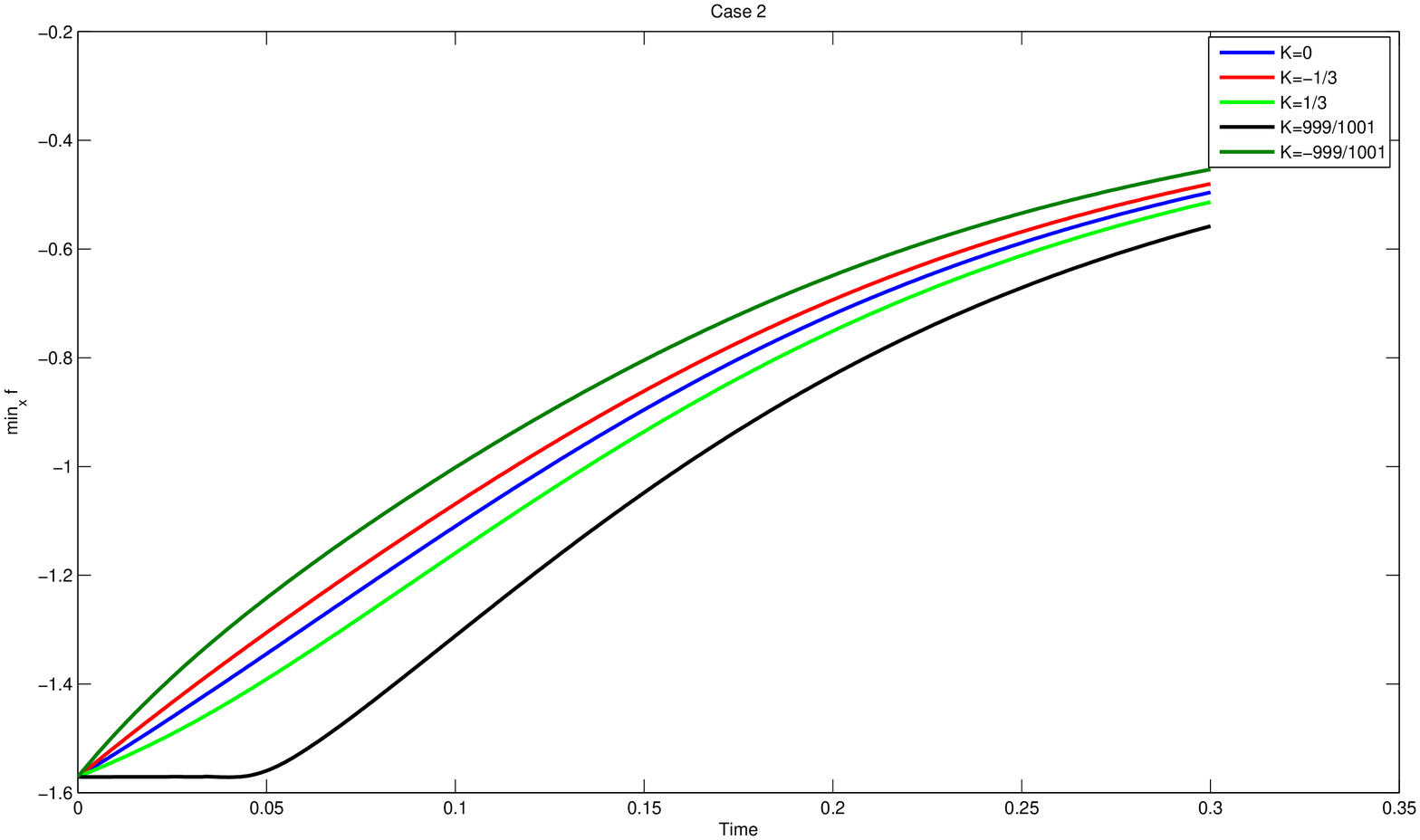} 
		\end{center}
		\caption{Evolution of $-\|f\|_{L^\infty}$ for different $\mathcal{K}$ in case 2.}
\label{IIcase2}
\end{figure}
\begin{figure}[t]
		\begin{center}
		\includegraphics[scale=0.4]{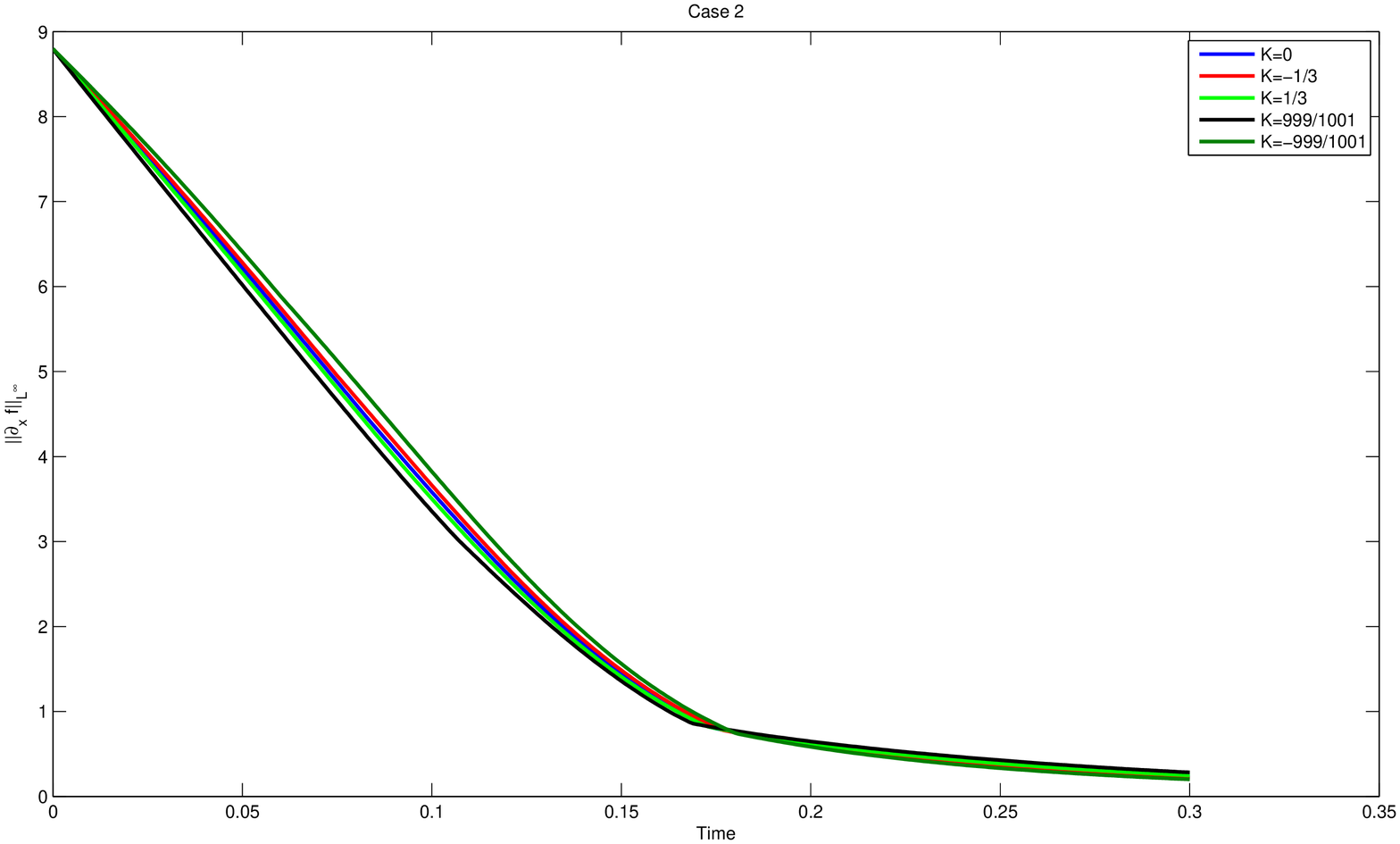} 
		\end{center}
		\caption{Evolution of $\|\pax f\|_{L^\infty}$ for different $\mathcal{K}$ in case 2.}
\label{IIcase2b}
\end{figure}

The case 3 (see Figure \ref{IIcase3} and \ref{IIcase3b}) approximates the solution corresponding to the initial datum 
$$
f_0(x)=-\left(\frac{\pi}{2}-0.000001\right)e^{-(x-2)^{12}}-\left(\frac{\pi}{2}-0.000001\right)e^{-(x+2)^{12}}+e^{-x^2}\cos^2(x).
$$ 
\begin{figure}[t]
		\begin{center}
		\includegraphics[scale=0.4]{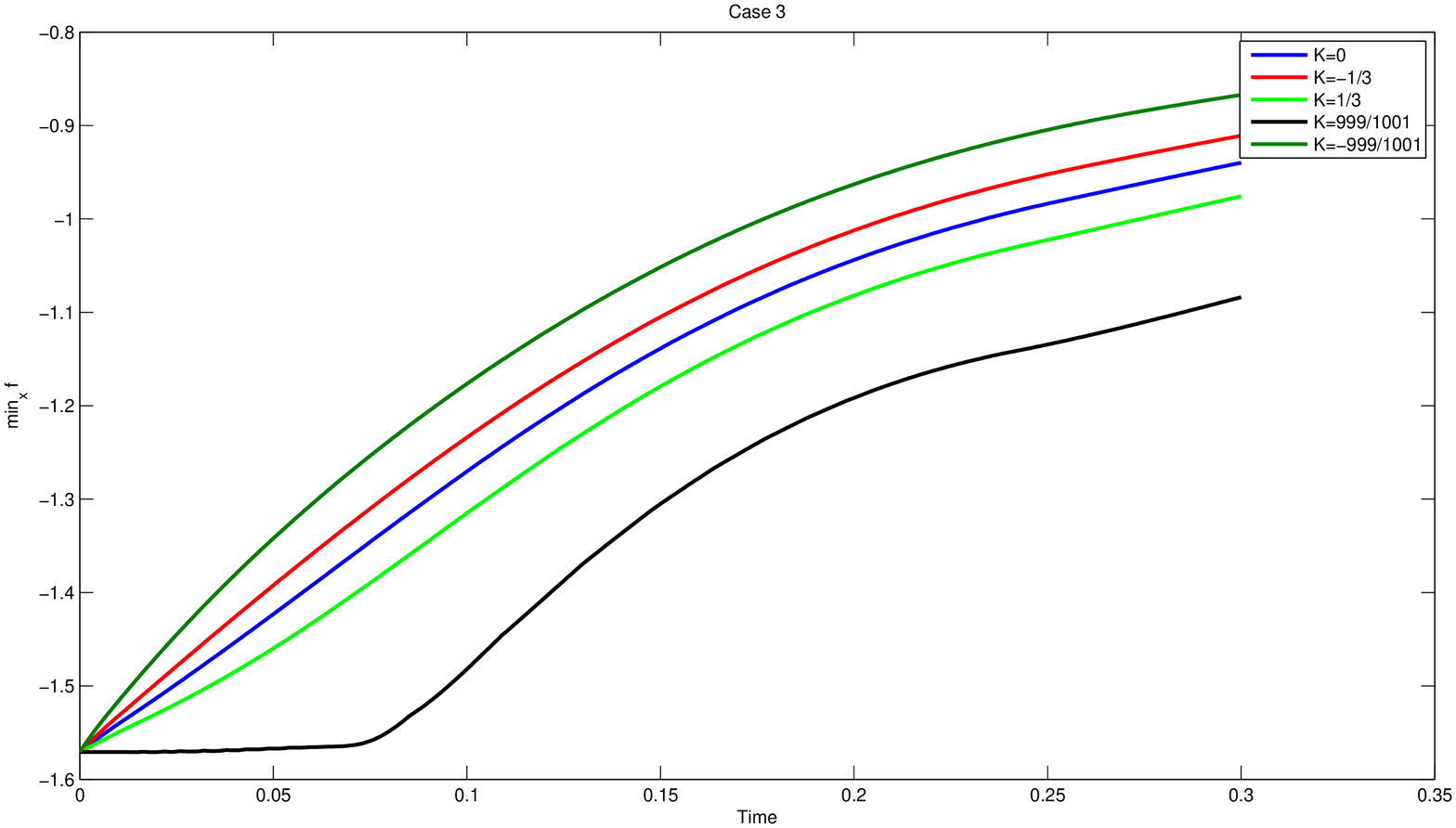} 
		\end{center}
		\caption{Evolution of $-\|f\|_{L^\infty}$ for different $\mathcal{K}$ in case 3.}
\label{IIcase3}
\end{figure}
\begin{figure}[t]
		\begin{center}
		\includegraphics[scale=0.4]{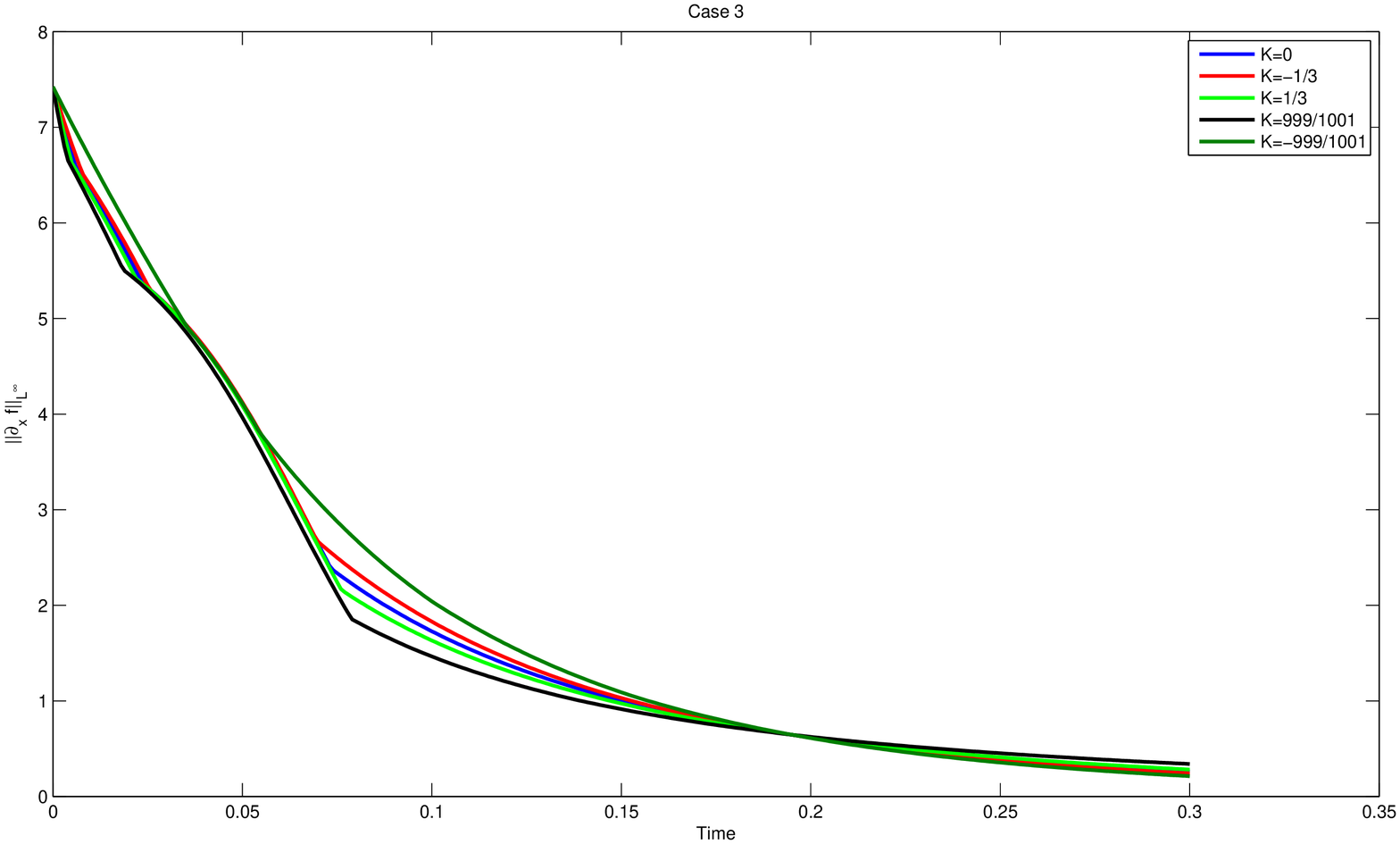} 
		\end{center}
		\caption{Evolution of $\|\pax f\|_{L^\infty}$ for different $\mathcal{K}$ in case 3.}
\label{IIcase3b}
\end{figure}

In these simulations we observe that $\|f\|_{C^1}$ decays but rather differently depending on $\mathcal{K}$. If $\mathcal{K}<0$ the decay of $\|f\|_{L^\infty}$ is faster when compared with the case $\mathcal{K}=0$. In the case where $\mathcal{K}>0$ the term corresponding to $\varpi_2$ slows down the decay of $\|f\|_{L^\infty}$ but we observe still a decay. Particularly, we observe that if $\mathcal{K}\approx 1$ ($\kappa^2\approx 0$) the decay is initially almost zero and then slowly increases. When the evolution of $\|\pax f\|_{L^\infty}$ is considered the situation is reversed. Now the simulations corresponding to $\mathcal{K}>0$ have the faster decay. With these result we can not define a \emph{stable} regime for $\mathcal{K}$ in which the evolution would be \emph{smoother}. Recall that we know that there is not any hypothesis on the sign or size of $\mathcal{K}$ to ensure the existence (see Theorem \ref{IIteo1} and \ref{IIteo3}).

\section{Turning waves}\label{IIsec4}
In this section we prove finite time singularities for equations \eqref{IIeq9}, \eqref{IIeq13} and \eqref{IIeqv2}. These singularities mean that the curve turns over or, equivalently, in finite time they can not be parametrized as graphs. The proof of turning waves follows the steps and ideas in \cite{ccfgl} for the homogeneus infinitely deep case where here we have to deal with the difficulties coming from the boundaries and the delta coming from the jump in the permeabilities.
\subsection{Infinite depth}\label{IIsec4.1}
Let $\Omega$ be the spatial domain considered, \emph{i.e.} $\Omega=\RR$ or $\Omega=\TT$. We have
\begin{teo}
Let us suppose that the Rayleigh-Taylor condition is satisfied, \emph{i.e.} $\rho^2-\rho^1>0$. Then there exists $f_0(x)=f(x,0)\in H^3(\Omega)$, an admissible (see Theorem \ref{IIteo1}) initial datum, such that, for any possible choice of $\kappa^1,\kappa^2>0$ and $h_2>>1$, there exists a solution of \eqref{IIeq9} and \eqref{IIeq13} for which $\lim_{t\rightarrow T^*}\|\pax f(t)\|_{L^\infty}=\infty$ in finite time $0<T^*<\infty$. For short time $t>T^*$ the solution can be continued but it is not a graph.
\label{IIteo2}
\end{teo}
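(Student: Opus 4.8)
The plan is to follow the Castro–Córdoba–Fefferman–Gancedo–López-Fernández strategy for turning waves, adapted to the inhomogeneous setting. The key observation is that the turning phenomenon is a statement about the parametrization of the curve, not about the existence time: we work with the full system \eqref{IIeq8} (resp. its periodic analog \eqref{IIeq11}) for the curve $z(\alpha,t)=(z_1(\alpha,t),z_2(\alpha,t))$ rather than with the graph equation \eqref{IIeq9}, since the latter is precisely what breaks down. First I would choose an initial curve $z_0(\alpha)=(z_{0,1}(\alpha),z_{0,2}(\alpha))$ which is a graph — so $\partial_\alpha z_{0,1}>0$ — but such that at one point $\alpha_0$ (by symmetry, $\alpha_0=0$) we have $\partial_\alpha z_{0,1}(0)=0$ after an infinitesimal perturbation; more precisely, one designs $z_0$ so that $\partial_\alpha z_{0,1}(0)=0$ and the sign of $\partial_t\partial_\alpha z_1(0,0)$ can be controlled. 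If $\partial_t\partial_\alpha z_1(0,0)<0$, then for $t$ slightly less than the turning time the curve is still a graph (with a near-vertical tangent, so $\|\partial_x f\|_{L^\infty}\to\infty$), and for $t$ slightly larger it has turned over and is no longer a graph; by Theorem~\ref{IIteo1} applied backward/forward in time from a genuinely smooth (non-graph) curve $z_0$, a smooth solution $z(\alpha,t)$ exists on a time interval around $t=0$, which gives the blow-up of $\|\partial_x f\|_{L^\infty}$ at the finite time $T^*$.

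The heart of the matter is therefore to evaluate the velocity at the turning point and show one can arrange the right sign. At $\alpha=0$, with $\partial_\alpha z_1(0)=0$ and using the symmetry ansatz ($z_1$ odd, $z_2$ even around $0$, so $z_1(\alpha)-z_1(-\alpha)$ is odd etc.), the contribution of the first integral in \eqref{IIeq8} to $\partial_t\partial_\alpha z_1(0)$ reduces — exactly as in \cite{ccfgl} — to a one-dimensional singular integral in $\partial_\alpha z_1$ and $\partial_\alpha^2 z_1$ whose sign is dictated by the Rayleigh–Taylor factor $\kappa^1(\rho^2-\rho^1)>0$, and which can be made strictly negative by a suitable choice of the profile (a small, localized, odd perturbation of the identity in the first component). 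The genuinely new terms are those carrying $\varpi_2$, i.e. the interaction with the fixed permeability interface $h(\alpha)=(\alpha,-h_2)$. Here I would estimate these terms and show they are a perturbation: because $\varpi_2$ is given by the nonsingular integral \eqref{IIeq10} (resp. \eqref{IIeq14}) against the kernel $d^h[f]=1/((x-\beta)^2+(f(x)+h_2)^2)$, and the curve $z$ stays at distance comparable to $h_2$ from the line $y=-h_2$, one gets $\|\varpi_2\|_{C^1}\lesssim h_2^{-1}$ times a quantity controlled by the chosen profile, and likewise the second and third integrals in \eqref{IIeq8} (which involve $\varpi_2$ and the kernel $1/|z(\alpha)-h(\beta)|^2$) contribute to $\partial_t\partial_\alpha z_1(0)$ a quantity bounded by $C h_2^{-2}$ uniformly in $\kappa^1,\kappa^2$. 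Hence for $h_2\gg 1$ the $\varpi_2$-contribution is dominated by the strictly negative main term coming from the Rayleigh–Taylor part, and $\partial_t\partial_\alpha z_1(0,0)<0$ follows for every choice of $\kappa^1,\kappa^2>0$; this is what the hypothesis $h_2\gg1$ in the statement buys us.

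The main obstacle — and the step deserving the most care — is making the decay $O(h_2^{-2})$ of the inhomogeneous terms \emph{uniform} in the permeability contrast $\mathcal{K}=(\kappa^1-\kappa^2)/(\kappa^1+\kappa^2)\in(-1,1)$, since $\varpi_2$ carries an explicit factor of $\mathcal{K}$ which is bounded but the constant in \eqref{IIeq10} must not hide any $\kappa$-dependence that could blow up; one checks directly from \eqref{IIeq10}, \eqref{IIeq14} that $\varpi_2/(\kappa^1(\rho^2-\rho^1))$ depends on the parameters only through the bounded factor $\mathcal{K}$, so after dividing \eqref{IIeq8} by $\kappa^1(\rho^2-\rho^1)$ the main term is $\mathcal{K}$-independent and the remainder is $O(|\mathcal{K}| h_2^{-2})=O(h_2^{-2})$. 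After this, I would (i) fix the profile and a large $h_2$ so that $\partial_t\partial_\alpha z_1(0,0)<0$, (ii) invoke Theorem~\ref{IIteo1} to propagate smoothness in a neighborhood of $t=0$ in both time directions, (iii) conclude by continuity that there is $T^*>0$ with the curve a graph for $t<T^*$, $\partial_x f$ unbounded as $t\uparrow T^*$, and the curve smooth but not a graph for $t$ slightly past $T^*$. The periodic case \eqref{IIeq13}, \eqref{IIeq14} is identical after replacing the rational kernels by their $\cot$/$\coth$ periodizations, the relevant symmetry and sign structure being preserved.
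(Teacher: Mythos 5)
Your outline of the first two steps matches the paper's: pass to the full curve system \eqref{IIeq8}/\eqref{IIeq11}, impose the odd/even symmetry at $\alpha=0$ with $\partial_\alpha z_1(0)=0$, reduce the question to the sign of $\partial_t\partial_\alpha z_1(0)$ (the expressions \eqref{IIsing3R}, \eqref{IIsing3T} in the paper), and show that the $\varpi_2$-contribution is small for $h_2\gg1$, uniformly in $\mathcal{K}\in(-1,1)$. Your observation that, after factoring out $\kappa^1(\rho^2-\rho^1)$, the only remaining $\kappa$-dependence is through the bounded factor $\mathcal{K}$, and that the $\varpi_2$-terms decay like a negative power of $h_2$, is consistent with the paper's estimates (the paper even takes a profile whose amplitude grows like $h_2^\delta$, $\delta<1/4$, so that the ``good'' negative term dominates the $\varpi_2$-error as $h_2\to\infty$; your fixed-profile variant would also give a negative sign, but the paper's construction gives a cleaner quantitative separation between $I_a$, $I_b^{h_2}$ and $I_2^{h_2}$).

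The genuine gap is in the third step, where you write that a smooth solution around $t=0$ follows ``by Theorem~\ref{IIteo1} applied backward/forward in time from a genuinely smooth (non-graph) curve $z_0$.'' That theorem cannot be invoked here: it is a well-posedness result for the \emph{graph} equation \eqref{IIeq9} in $H^k$, and it requires the Rayleigh--Taylor condition $\rho^2>\rho^1$ on a graph. The initial curve you need at $t=0$ has a vertical tangent (indeed $\partial_\alpha z_1(0)=0$), hence it is \emph{not} a graph, and once it turns over the RT sign condition fails on part of the interface, so the Sobolev energy argument of Theorem~\ref{IIteo1} breaks down (the top-order terms lose the parabolic sign). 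This is precisely why the paper has to prove a separate local-existence result, via an abstract Cauchy--Kovalevsky theorem in scales of Hardy--Sobolev spaces of analytic curves on complex strips $\BB_r$, together with the estimates \eqref{IIeq20}--\eqref{IIeq22} for the complexified operator $F$ and the propagation of the arc-chord and $d^h$ bounds. Without this analytic local solvability for arbitrary (non-graph, possibly RT-unstable) curves, the sign computation at $t=0$ does not produce a solution to compare against, and the blow-up of $\|\partial_x f\|_{L^\infty}$ as $t\uparrow T^*$ is not established. So the structure of your argument is right, but you must replace the appeal to Theorem~\ref{IIteo1} by a Nirenberg--Nishida type Cauchy--Kovalevsky theorem adapted to \eqref{IIeq8} (with the additional $\varpi_2$-kernels controlled through a complexified $d^h[z]$), as the paper does.
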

\begin{proof}
To simplify notation we drop the physical parameters present in the problem by considering $\kappa^1(\rho^2-\rho^1)=2\pi$. The proof has three steps. First we consider solutions which are arbitrary curves (not necesary graphs) and we \emph{translate} the singularity formation to the fact $\paa v_1(0)=\pat \paa z_1(0)<0$. The second step is to construct a family of curves such that this expression is negative. Thus, we have that \emph{if there exists, forward and backward in time, a solution in the Rayleigh-Taylor stable case corresponding to initial data which are arbitrary curves} then, we have proved that there is a singularity in finite time. The last step is to prove, using a Cauchy-Kovalevsky theorem, that there exists local in time solutions in this unstable case.

\textbf{Obtaining the correct expression:} Consider the case $\Omega=\RR$. Due to \eqref{IIeq8} we have
$$
\paa \pat z_1(\alpha)=I_1+I_2+I_3,
$$
where
$$
I_1(\alpha)=\paa\text{P.V.}\int_\RR\frac{z_1(\alpha)-z_1(\alpha-\beta)}{|z(\alpha)-z(\alpha-\beta)|^2}(\paa z_1(\alpha)-\paa z_1(\alpha-\beta))d\beta,
$$
$$
I_2(\alpha)=\paa \frac{1}{2\pi}\text{P.V.}\int_\RR\varpi_2(\alpha-\beta)\frac{-z_2(\alpha)-h_2}{|z(\alpha)-h(\alpha-\beta)|^2}d\beta,
$$
$$
I_3(\alpha)=\paa\left(\paa z_1(\alpha)\frac{1}{2\pi}\text{P.V.}\int_\RR\varpi_2(\alpha-\beta)\frac{z_2(\alpha)+h_2}{|z(\alpha)-h(\alpha-\beta)|^2}d\beta\right).
$$
Assume now that the following conditions for $z(\alpha)$ holds:
\begin{itemize}
 \item $z_i(\alpha)$ are odd functions,
 \item $\paa z_1(0)=0,\paa z_1(\alpha)>0$ $\forall \alpha\neq0$, and $\paa z_2(0)>0$,
 \item $z(\alpha)\neq h(\alpha)$ $\forall \alpha$.
\end{itemize}
The previous hypotheses mean that $z$ is a curve satisfying the arc-chord condition and $\paa z(0)$ only has vertical component. Due to these conditions on $z$ we have $\paa z_1(0)=0$ and $\paa^2 z_1$ is odd (and then the second derivative at zero is zero) and we get that $I_3(0)=0$. For $I_1$ we get
\begin{multline*}
I_1(0)=\text{P.V.}\int_\RR\frac{\paa^2z_1(\beta)z_1(\beta)+(\paa z_1(\beta))^2}{(z_1(\beta))^2+(z_2(\beta))^2}d\beta-2\text{P.V.}\int_\RR\frac{(\paa z_1(\beta)z_1(\beta))^2}{((z_1(\beta))^2+(z_2(\beta))^2)^2}d\beta\\
+2\text{P.V.}\int_\RR\frac{\paa z_1(\beta)z_1(\beta)z_2(\beta)\left(\paa z_2(0)-\paa z_2(\beta)\right)}{((z_1(\beta))^2+(z_2(\beta))^2)^2}d\beta.
\end{multline*}
We integrate by parts and we obtain, after some lengthy computations,
\begin{equation}\label{IIsing1R}
I_1(0)=4\paa z_2(0)\text{P.V.}\int_0^\infty\frac{\paa z_1(\beta)z_1(\beta)z_2(\beta)}{((z_1(\beta))^2+(z_2(\beta))^2)^2}d\beta.
\end{equation}
For the term with the second vorticity we have
\begin{multline*}
I_2(0)=\frac{1}{2\pi}\text{P.V.}\int_\RR\frac{\partial_\beta\varpi_2(-\beta)h_2}{\beta^2+h_2^2}d\beta+\frac{1}{2\pi}\text{P.V.}\int_\RR\frac{-\varpi_2(-\beta)\paa z_2(0)}{\beta^2+h_2^2}d\beta\\
-\frac{1}{2\pi}\text{P.V.}\int_\RR\frac{2\varpi_2(-\beta)\beta h_2}{(\beta^2+h_2^2)^2}d\beta-\frac{1}{2\pi}\text{P.V.}\int_\RR\frac{\paa z_2(0)\varpi_2(-\beta)(-h_2^2)}{(\beta^2+h_2^2)^2}d\beta,
\end{multline*}
and, after an integration by parts we obtain
\begin{equation}\label{IIsing2R}
I_2(0)=-\frac{\paa z_2(0)}{2\pi}\text{P.V.}\int_0^\infty\frac{(\varpi_2(\beta)+\varpi_2(-\beta))\beta^2}{(\beta^2+h_2^2)^2}d\beta.
\end{equation}
Putting all together we obtain that in the flat at infinity case the important quantity for the singularity is
\begin{multline}\label{IIsing3R}
\paa v_1(0)=\paa z_2(0)\left(4\text{P.V.}\int_0^\infty\frac{\paa z_1(\beta)z_1(\beta)z_2(\beta)}{((z_1(\beta))^2+(z_2(\beta))^2)^2}d\beta\right.\\
\left.-\frac{1}{2\pi}\text{P.V.}\int_0^\infty\frac{(\varpi_2(\beta)+\varpi_2(-\beta))\beta^2}{(\beta^2+h_2^2)^2}d\beta\right),
\end{multline}
where, due to \eqref{IIeq7}, $\varpi_2$ is defined as
\begin{equation}\label{IIw2f}
\varpi_2(\beta)=2\mathcal{K}\text{P.V.}\int_\RR\frac{(h_2+z_2(\gamma))\paa z_2(\gamma)}{(h_2+z_2(\gamma))^2+(\beta-z_1(\gamma))^2}d\gamma.
\end{equation}

We apply the same procedure to equation \eqref{IIeq11} and we get the importat quantity in the periodic setting (recall the superscript $p$ in the notation denoting that we are in the periodic setting):
\begin{multline}\label{IIsing3T}
\paa v^p_1(0)=\paa z_2(0)\left(\int_0^\pi\frac{\paa z_1(\beta)\sin(z_1(\beta))\sinh(z_2(\beta))}{(\cosh(z_2(\beta))-\cos(z_1(\beta)))^2}d\beta\right.\\
\left.+\frac{1}{4\pi}\int_0^\pi\frac{(\varpi^p_2(\beta)+\varpi^p_2(-\beta))(-1+\cosh(h_2)\cos(\beta))}{(\cosh(h_2)-\cos(\beta))^2}d\beta\right),
\end{multline}
and, due to \eqref{IIeq12}, 
\begin{equation}\label{IIw2p}
\varpi^p_2(\beta)=\mathcal{K}\int_{\TT}\frac{\sin(\beta-z_1(\gamma))\paa z_1(\gamma)}{\cosh(h_2+z_2(\gamma))-\cos(\beta-z_1(\gamma))}d\gamma.
\end{equation}

\textbf{Taking the appropriate curve:} To clarify the proof, let us consider first the periodic setting. Given $1<h_2$, we consider $a,b,$ constants such that $2<b\leq a$ and let us define
$$
z_1(\alpha)=\alpha-\sin(\alpha),
$$
and
\begin{equation}\label{IIzT}
z_2(\alpha)=\left\{
\begin{array}{lllll}\displaystyle \frac{\sin(a\alpha)}{a} & \hbox{ if }  \displaystyle 0\leq \alpha\leq \frac{\pi}{a},\\
\displaystyle \frac{\sin\left(\pi\frac{\alpha-(\pi/a)}{(\pi/a)-(\pi/b)}\right)}{b} & \text { if } \displaystyle\frac {\pi}{a}<\alpha<\frac{\pi}{b},\\
\displaystyle \left(\frac{-h_2/2}{\frac{\pi}{2}-\frac{\pi}{b}}\right)\left(\alpha -\frac{\pi}{b}\right) & \text{ if }\displaystyle  \frac{\pi}{b}\leq\alpha<\frac{\pi} {2},\\
\displaystyle -\left(\frac{-h_2/2}{\frac{\pi}{2}-\frac{\pi}{b}}\right)\left(\alpha -\pi+\frac{\pi}{b}\right) & \text{ if } 
\displaystyle\frac{\pi}{2}\leq\alpha<\pi (1-\frac{1}{b}),\\
\displaystyle 0 & \text{ if }
\displaystyle \pi(1-\frac{1}{b})\leq\alpha.\\
                   \end{array}\right.
\end{equation}

\begin{figure}[t]
		\begin{center}
		\includegraphics[scale=0.4]{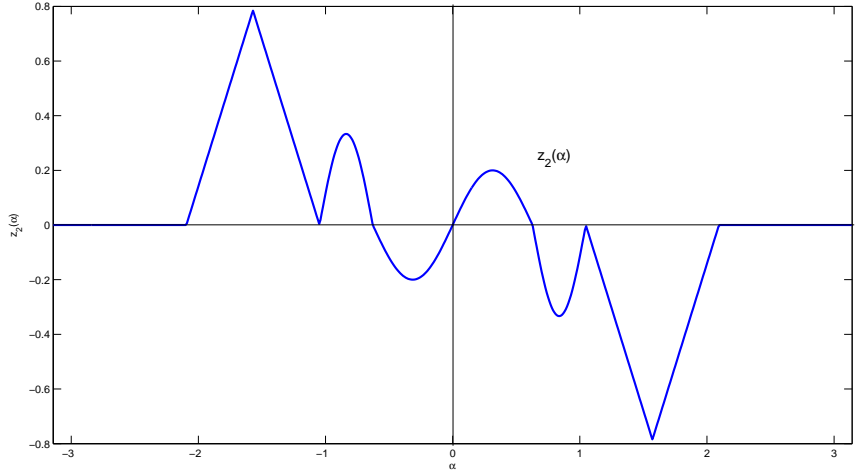} 
		\end{center}
		\caption{$z_2$ in \eqref{IIzT} for $a=5,b=3,h_2=\pi/2$}
\label{IIturning}
\end{figure}

Due to the definition of $z_2$, we have
$$
\frac{h_2}{2}\leq h_2 + z_2(\alpha)\leq \frac{3h_2}{2},
$$
and using \eqref{IIw2p}, we get
$$
|\varpi^p_2(\beta)|\leq \frac{4\pi}{\cosh(h_2/2)-1}.
$$
Inserting this curve in \eqref{IIsing3T} we obtain 
$$
\partial_\alpha v_1^p(0)\leq I_a+I^{h_2}_b+I^{h_2}_2,
$$
with 
$$
I_a=\int_0^{\pi/a}\frac{(1-\cos(\beta))\sin(\beta-\sin(\beta))\sinh\left(\frac{\sin(a\beta)}{a}\right)}{\left(\cosh\left(\frac{\sin(a\beta)}{a}\right)-\cos(\beta-\sin(\beta))\right)^2}d\beta,
$$
\begin{multline*}
I_b^{h_2}=\int_{(\pi/b+\pi)/3}^{\pi/2}\frac{(1-\cos(\beta))\sin(\beta-\sin(\beta))
\sinh\left(\left(\frac{-h_2/2}{\frac{\pi}{2}-\frac{\pi}{b}}\right)\left(\beta -\frac{\pi}{b}\right)\right)}{\left(\cosh\left(\left(\frac{-h_2/2}{\frac{\pi}{2}-\frac{\pi}{b}}\right)\left(\beta -\frac{\pi}{b}\right)\right)-\cos(\beta-\sin(\beta))\right)^2}d\beta\\
+\int_{\pi/2}^{(2\pi-\pi/b)/3)}\frac{(1-\cos(\beta))\sin(\beta-\sin(\beta))
\sinh\left(-\left(\frac{-h_2/2}{\frac{\pi}{2}-\frac{\pi}{b}}\right)\left(\alpha -\pi+\frac{\pi}{b}\right)\right)}{\left(\cosh\left(-\left(\frac{-h_2/2}{\frac{\pi}{2}-\frac{\pi}{b}}\left(\alpha -\pi+\frac{\pi}{b}\right)\right)\right)-\cos(\beta-\sin(\beta))\right)^2}d\beta,
\end{multline*}
and $I^{h_2}_2$ is the integral involving the second vorticity $\varpi^p_2$. We remark that $I_a$ does not depend on $h_2$. The sign of $I^{h_2}_b$ is the same as the sign of $z_2$, thus we get $I_b^{h_2}<0$ and this is independent of the choice of $a$ and $h_2$. Now we fix $b$ and we take $h_2$ sufficiently large such that 
$$
I^{h_2}_b+I^{h_2}_2\leq I^{h_2}_b+\frac{2\pi}{\cosh(h_2/2)-1}\frac{1+\cosh(h_2)}{(\cosh(h_2)-1)^2}<0.
$$
We can do that because
$$
\frac{c_b\sinh(h_2/3)}{(\cosh(h_2/2)+1)^2} \leq |I^{h_2}_b|
$$
or, equivalently,
$$
I^{h_2}_b+I^{h_2}_2=-|I^{h_2}_b|+I^{h_2}_2\leq -\frac{c_b\sinh(h_2/3)}{(\cosh(h_2/2)-1)^2}+\frac{2\pi}{\cosh(h_2/2)-1}\frac{1+\cosh(h_2)}{(\cosh(h_2)-1)^2}<0,
$$
if $h_2$ is large enough. The integral $I_a$ is well defined and positive, but goes to zero as $a$ grows. Then, fixed $b$ and $h_2$ in such a way $I^{h_2}_b+I^{h_2}_2<0$, we take $a$ sufficiently large such that $I_a+I^{h_2}_b+I^{h_2}_2<0$. We are done with the periodic case.

We proceed with the flat at infinity case. We take $2<b\leq a$ as before and $0<\delta<1$ and define
\begin{equation}\label{IIzRI}
z_1(\alpha)=\alpha-\sin(\alpha)\exp(-\alpha^2),
\end{equation}
and 
\begin{equation}\label{IIzR}
z_2(\alpha)=\left\{
\begin{array}{lllll}\displaystyle \frac{\sin(a\alpha)}{a} & \hbox{ if }  \displaystyle 0\leq \alpha\leq \frac{\pi}{a},\\
\displaystyle \frac{\sin\left(\pi\frac{\alpha-(\pi/a)}{(\pi/a)-(\pi/b)}\right)}{b} & \text { if } \displaystyle\frac {\pi}{a}<\alpha<\frac{\pi}{b},\\
\displaystyle \left(\frac{-h_2^\delta}{\frac{\pi}{2}-\frac{\pi}{b}}\right)\left(\alpha -\frac{\pi}{b}\right) & \text{ if }\displaystyle  \frac{\pi}{b}\leq\alpha<\frac{\pi} {2},\\
\displaystyle -\left(\frac{-h_2^\delta}{\frac{\pi}{2}-\frac{\pi}{b}}\right)\left(\alpha -\pi+\frac{\pi}{b}\right) & \text{ if } 
\displaystyle\frac{\pi}{2}\leq\alpha<\pi (1-\frac{1}{b}),\\
\displaystyle 0 & \text{ if }
\displaystyle \pi(1-\frac{1}{b})\leq\alpha.\\
                   \end{array}\right.
\end{equation}
We have
$$
h_2-h_2^\delta<h_2+z_2(\beta)<h_2+h_2^\delta,
$$
and we assume $1<h_2-h_2^\delta$. Inserting the curve \eqref{IIzRI} and \eqref{IIzR} in \eqref{IIw2f} and changing variables, we obtain 
\begin{eqnarray*}
|\varpi_2(\beta)|\leq 2\text{P.V.}\int_\RR\frac{(h_2+h_2^\delta)h_2^\delta\left(\frac{\pi}{2}-\frac{\pi}{b}\right)^{-1}}{(h_2-h_2^\delta)^2+(\gamma-\sin(\beta-\gamma)e^{-(\beta-\gamma)^2})^2}d\gamma.
\end{eqnarray*}
We split the integral in two parts:
$$J_1=2\text{P.V.}\int_{B(0,2(h_2-h_2^\delta))}\frac{(h_2+h_2^\delta)h_2^\delta\left(\frac{\pi}{2}-\frac{\pi}{b}\right)^{-1}}{(h_2-h_2^\delta)^2+(\gamma-\sin(\beta-\gamma)e^{-(\beta-\gamma)^2})^2}d\gamma\leq 8h_2^\delta\left(\frac{\pi}{2}-\frac{\pi}{b}\right)^{-1},
$$
and
$$
J_2=2\text{P.V.}\int_{B^c(0,2(h_2-h_2^\delta))}\frac{(h_2+h_2^\delta)h_2^\delta\left(\frac{\pi}{2}-\frac{\pi}{b}\right)^{-1}}{(h_2-h_2^\delta)^2+(\gamma-\sin(\beta-\gamma)e^{-(\beta-\gamma)^2})^2}d\gamma.
$$
We have
\begin{multline*}
K_1=\text{P.V.}\int_{2(h_2-h_2^\delta)}^\infty\frac{1}{(h_2-h_2^\delta)^2+(\gamma-\sin(\beta-\gamma)e^{-(\beta-\gamma)^2})^2}d\gamma\\
\leq\text{P.V.}\int_{2(h_2-h_2^\delta)}^\infty\frac{1}{(h_2-h_2^\delta)^2+\gamma^2-2\gamma\sin(\beta-\gamma)e^{-(\beta-\gamma)^2}}d\gamma\\
\leq \text{P.V.}\int_{2(h_2-h_2^\delta)}^\infty\frac{1}{(h_2-h_2^\delta-\gamma)^2+2\gamma(h_2-h_2^\delta-\sin(\beta-\gamma)e^{-(\beta-\gamma)^2})}d\gamma,
\end{multline*}
and using that $h_2$ is such that $1<h_2-h_2^\delta$, we get
$$
K_1\leq\text{P.V.}\int_{2(h_2-h_2^\delta)}^\infty\frac{1}{(h_2-h_2^\delta-\gamma)^2}d\gamma=\frac{1}{h_2-h_2^\delta}.
$$
The remaining integral is
\begin{multline*}
K_2=\text{P.V.}\int^{-2(h_2-h_2^\delta)}_{-\infty}\frac{1}{(h_2-h_2^\delta)^2+(\gamma-\sin(\beta-\gamma)e^{-(\beta-\gamma)^2})^2}d\gamma\\
\leq\text{P.V.}\int^{-2(h_2-h_2^\delta)}_{-\infty}\frac{1}{(h_2-h_2^\delta)^2+\gamma^2-2\gamma\sin(\beta-\gamma)e^{-(\beta-\gamma)^2}}d\gamma\\
\leq \text{P.V.}\int^{-2(h_2-h_2^\delta)}_{-\infty}\frac{1}{(h_2-h_2^\delta+\gamma)^2-2\gamma(h_2-h_2^\delta+\sin(\beta-\gamma)e^{-(\beta-\gamma)^2})}d\gamma,
\end{multline*}
and using that $h_2$ is such that $1<h_2-h_2^\delta$, we get
$$
K_2\leq\text{P.V.}\int^{-2(h_2-h_2^\delta)}_{-\infty}\frac{1}{(h_2-h_2^\delta+\gamma)^2}d\gamma=\frac{1}{h_2-h_2^\delta}.
$$
Putting all together we get
$$
J_2\leq 4h_2^\delta\left(\frac{\pi}{2}-\frac{\pi}{b}\right)^{-1},
$$
and
$$
|\varpi_2(\beta)|\leq 12h_2^\delta\left(\frac{\pi}{2}-\frac{\pi}{b}\right)^{-1}.
$$
Using this bound in \eqref{IIsing3R} we get
$$
|I_2^{h_2}|\leq 3h_2^{\delta-1}\left(\frac{\pi}{2}-\frac{\pi}{b}\right)^{-1}.
$$
Then, as before,
$$
\partial_\alpha v_1(0)\leq I_a+I_b^{h_2}+I_2^{h_2},
$$
where $I_a,I_b^{h_2}$ are the integrals $I_1(0)$ on the intervals $(0,\pi/a)$ and $((\pi/b+\pi)/3,(2\pi-\pi/b)/3)$, respectively. We have
$$
c_b\frac{2h_2^\delta}{3(h_2^\delta)^4}\leq|I_b^{h_2}|
$$
thus,
$$
I^{h_2}_b+I^{h_2}_2=-|I^{h_2}_b|+I^{h_2}_2\leq -c_b\frac{2h_2^{-3\delta}}{3}+3h_2^{\delta-1}\left(\frac{\pi}{2}-\frac{\pi}{b}\right)^{-1}.
$$ 
To ensure that the decay of $I_2^{h_2}$ is faster than the decay of $I_b^{h_2}$ we take $\delta<1/4$. Now, fixing $b$, we can obtain $1<h_2$ and $0<\delta<1/4$ such that $1<h_2-h_2^\delta$ and $I^{h_2}_b+I^{h_2}_2<0$. Taking $a>>b$ we obtain a curve such that $\partial_\alpha v_1(0)<0$. In order to conclude the argument it is enough to approximate these curves \eqref{IIzT} and \eqref{IIzR} by analytic functions. We are done with this step of the proof.

\textbf{Showing the forward and backward solvability:} At this point, we need to prove that there is a solution forward and backward in time corresponding to these curves \eqref{IIzT} and \eqref{IIzR}. Indeed, if this solution exists then, due to the previous step, we obtain that, for a short time $t<0$, the solution is a graph with finite $H^3(\Omega)$ energy (in fact, it is analytic). This graph at time $t=0$ has a blow up for $\|\pax f\|_{L^\infty}$ and, for a short time $t>0$, the solution can not be parametrized as a graph. We show the result corresponding to the flat at infinity case, being the periodic one analogous. We consider curves $z$ satisfying the arc-chord condition and such that
$$
\lim_{|\alpha|\rightarrow\infty}|z(\alpha)-(\alpha,0)|=0.
$$
We define the complex strip $\BB_r=\{\zeta+i\xi, \zeta\in\RR, |\xi|<r\},$ and the spaces 
\begin{equation}\label{IIXdef}
X_r=\{z=(z_1,z_2) \text{ analytic curves satisfying the arc-chord condition on } \BB_r\},
\end{equation}
with norm
$$
\|z\|^2_r=\|z(\gamma)-(\gamma,0)\|^2_{H^3(\BB_r)},
$$
where $H^3(\BB_r)$ denotes the Hardy-Sobolev space on the strip with the norm 
\begin{equation}\label{IIXnorm}
\|f\|^2_r=\sum_{\pm}\int_\RR|f(\zeta\pm r i)|^2d\zeta+\int_\RR|\paa^3 f(\zeta\pm r i)|^2d\zeta,
\end{equation}
(see \cite{bakan2007hardy}). These spaces form a Banach scale. For notational convenience we write $\gamma=\alpha\pm ir$, $\gamma'=\alpha\pm ir'$. Recall that, for $0<r'<r$, 
\begin{equation}\label{IIcauchy2}
\|\partial_\alpha \cdot\|_{L^2(\BB_{r'})}\leq\frac{C}{r-r'}\|\cdot\|_{L^2(\BB_r)}.
\end{equation}
We consider the complex extension of \eqref{IIeq7} and \eqref{IIeq8}, which is given by
\begin{multline}
\label{IIeq18}
\pat z(\gamma)=\text{P.V.}\int_\RR\frac{(z_1(\gamma)-z_1(\gamma-\beta))(\paa z(\gamma)-\paa z(\gamma-\beta))}{(z_1(\gamma)-z_1(\gamma-\beta))^2+(z_2(\gamma)-z_2(\gamma-\beta))^2}d\beta\\
+\frac{1}{2\pi}\text{P.V.}\int_\RR\frac{\varpi_2(\gamma-\beta)(z(\gamma)-h(\gamma-\beta))^\perp}{(z_1(\gamma)-(\gamma-\beta))^2+(z_2(\gamma)+h_2)^2}d\beta\\
+\paa z(\gamma)\frac{1}{2\pi}\text{P.V.}\int_\RR\frac{(z_2(\gamma)+h_2)\varpi_2(\gamma-\beta)}{(z_1(\gamma)-(\gamma-\beta))^2+(z_2(\gamma)+h_2)^2}d\beta,
\end{multline}
with
\begin{equation}
\label{IIeq17}
\varpi_2(\gamma)=2\mathcal{K}\,\text{P.V.}\int_\RR\frac{(h_2+z_2(\gamma-\beta))\paa z_2(\gamma-\beta)}{(\gamma-z_1(\gamma-\beta))^2+(h_2+z_2(\gamma-\beta))^2}d\beta.
\end{equation}
Recall the fact that in the case of a real variable graph $\varpi_2$ has the same regularity as $f$, but in the case of an arbitrary curve $\varpi_2$ is, roughly speaking, at the level of the first derivative of the interface. This fact will be used below. We define
\begin{equation}
\label{IIeq19}
d^-[z](\gamma,\beta)=\frac{\beta^2}{(z_1(\gamma)-z_1(\gamma-\beta))^2+(z_2(\gamma)-z_2(\gamma-\beta))^2},
\end{equation}
\begin{equation}
\label{IIeq19b}
d^h[z](\gamma,\beta)=\frac{1+\beta^2}{(z_1(\gamma)-(\gamma-\beta))^2+(z_2(\gamma)+h_2)^2}.
\end{equation}
The function $d^-$ is the complex extension of the \emph{arc chord condition} and we need it to bound the terms with $\varpi_1$. The function $d^h$ comes from the different permeabilities and we use it to bound the terms with $\varpi_2$. We observe that both are bounded functions for the considered curves. Consider $0< r'<r$ and the set
$$
O_R=\{z\in X_r\text{ such that }\|z\|_r<R, \|d^-[z]\|_{L^\infty(\BB_r)}<R, \|d^h[z]\|_{L^\infty(\BB_r)}<R\},  
$$
where $d^-[z]$ and $d^h[z]$ are defined in \eqref{IIeq19} and \eqref{IIeq19b}. Then we claim that, for $z,w\in O_R$, the righthand side of \eqref{IIeq18}, $F:O_R\rightarrow X_{r'}$ is continuous and the following inequalities holds:
\begin{eqnarray}\label{IIeq20}&&\|F[z]\|_{H^3(\BB_{r'})}\leq\frac{C_R}{r-r'}\|z\|_r,\\
\label{IIeq21}&&\|F[z]-F[w]\|_{H^3(\BB_{r'})}\leq\frac{C_R}{r-r'}\|z-w\|_{H^3(\BB_r)},\\
\label{IIeq22}&&\sup_{\gamma\in \BB_r,\beta\in\RR}|F[z](\gamma)-F[z](\gamma-\beta)|\leq C_R|\beta|.
\end{eqnarray}
The claim for the spatial operator corresponding to $\varpi_1$ has been studied in \cite{ccfgl}, thus, we only deal with the new terms containing $\varpi_2$. For the sake of brevity we only bound some terms, being the other analogous. Using Tonelli's theorem and Cauchy-Schwartz inequality we have that
$$
\|\varpi_2\|_{L^2(\BB_{r'})}\leq c\|d^h[z]\|_{L^\infty}(1+\|z_2\|_{L^\infty(\BB_{r'})})\|\paa z_2\|_{L^2(\BB_{r'})}.
$$
Moreover, we get 
\begin{equation}\label{IIeqw2}
\|\varpi_2\|_{H^2(\BB_{r})}\leq C_R\|z\|_r.
\end{equation}
For $\paa^3 \varpi_2$ the procedure is similar but we lose one derivative. Using \eqref{IIcauchy2} and Sobolev embedding we conclude 
\begin{equation}\label{IIeqw2b}
\|\varpi_2\|_{H^3(\BB_{r'})}\leq \frac{C_R}{r-r'}\|z\|_r.
\end{equation}
From here inequality \eqref{IIeq20} follows. Inequality \eqref{IIeq21}, for the terms involving $\varpi_1$, can be obtained using the properties of the Hilbert transform as in \cite{ccfgl}. Let's change slightly the notation and write $\varpi_2[z](\gamma)$ for the integral in \eqref{IIeq17}. We split
\begin{eqnarray*}
A_1&=&\text{P.V.}\int_\RR\frac{(\varpi_2[z](\gamma'-\beta)-\varpi_2[w](\gamma'-\beta))(z(\gamma')-h(\gamma'-\beta))^\perp}{(z_1(\gamma')-(\gamma'-\beta))^2+(z_2(\gamma')+h_2)^2}d\beta\\
&&+\text{P.V.}\int_\RR\frac{\varpi_2[w](\gamma'-\beta)\left((z(\gamma')-h(\gamma'-\beta))^\perp-(w(\gamma')-h(\gamma'-\beta))^\perp\right)}{(z_1(\gamma')-(\gamma'-\beta))^2+(z_2(\gamma')+h_2)^2}d\beta\\
&&+\text{P.V.}\int_\RR\varpi_2[w](\gamma'-\beta)(w(\gamma')-h(\gamma'-\beta))^\perp\frac{d^h[z](\gamma',\beta)-d^h[w](\gamma',\beta)}{1+\beta^2}d\beta\\
&&=B_1+B_2+B_3.
\end{eqnarray*}
In $B_3$ we need some extra decay at infinity to ensure the finiteness of the integral. We compute
$$
|d^h[z]-d^h[w]|\leq C_R\frac{|d^h[z]d^h[w]|}{1+\beta^2}\left|(1+\beta)(z_1-w_1)+z_2-w_2\right|<C_R|z-w|\frac{|1+\beta|}{1+\beta^2},
$$
and, due to Sobolev embedding, we get
$$
\|B_3\|_{L^2(\BB_{r'})}\leq C_R \|\varpi_2[w]\|_{L^2(\BB_{r'})}\|z-w\|_{L^\infty(\BB_{r'})}\leq\frac{C_R}{r-r'}\|z-w\|_{H^3(\BB_r)}. 
$$
For the second term we obtain the same bound
$$
\|B_2\|_{L^2(\BB_{r'})}\leq C_R \|\varpi_2[w]\|_{L^2(\BB_{r'})}\|z-w\|_{L^\infty(\BB_{r'})}\leq\frac{C_R}{r-r'}\|z-w\|_{H^3(\BB_r)}. 
$$
We split $B_1$ componentwise. In the first coordinate we have
\begin{multline*}
\|C_1\|_{L^2(\BB_{r'})}=\left\|\text{P.V.}\int_\RR\frac{(\varpi_2[z](\gamma'-\beta)-\varpi_2[w](\gamma'-\beta))(-z_2(\gamma')-h_2)}{(z_1(\gamma')-(\gamma'-\beta))^2+(z_2(\gamma')+h_2)^2}d\beta\right\|_{L^2(\BB_{r'})}\\
\leq C_R\|\varpi_2[z]-\varpi_2[w]\|_{L^2(\BB_{r'})}.
\end{multline*}
In the second coordinate we need to ensure the integrability at infinity. We get
\begin{eqnarray*}
C_2&=&\text{P.V.}\int_\RR\frac{(\varpi_2[z](\gamma'-\beta)-\varpi_2[w](\gamma'-\beta))(z_1(\gamma')-\gamma')}{(z_1(\gamma')-(\gamma'-\beta))^2+(z_2(\gamma')+h_2)^2}d\beta\\
&&+\text{P.V.}\int_\RR(\varpi_2[z](\gamma'-\beta)-\varpi_2[w](\gamma'-\beta))\left(\frac{\beta d^h[z]}{1+\beta^2}-\frac{1}{\beta}\right)d\beta\\
&&+H\varpi_2[z](\gamma')-H\varpi_2[w](\gamma'),
\end{eqnarray*}
and, with this splitting and the properties of the Hilbert transform, we obtain
$$
\|C_2\|_{L^2(\BB_{r'})}\leq C_R\|\varpi_2[z]-\varpi_2[w]\|_{L^2(\BB_{r'})}.
$$
We get
$$
\varpi_2[z]-\varpi_2[w]=C_3+C_4+C_5,
$$
where
$$
C_3=2\mathcal{K}\;\text{P.V.}\int_\RR\frac{(z_2(\gamma-\beta)-w_2(\gamma-\beta))\paa z_2(\gamma-\beta)}{(\gamma-z_1(\gamma-\beta))^2+(h_2+z_2(\gamma-\beta))^2}d\beta,
$$
$$
C_4=2\mathcal{K}\;\text{P.V.}\int_\RR\frac{(h_2+w_2(\gamma-\beta))(\paa z_2(\gamma-\beta)-\paa w_2(\gamma-\beta))}{(\gamma-z_1(\gamma-\beta))^2+(h_2+z_2(\gamma-\beta))^2}d\beta,
$$
and
$$
C_5=2\mathcal{K}\;\text{P.V.}\int_\RR(h_2+w_2(\gamma-\beta))\paa w_2(\gamma-\beta)\frac{d^h[z](\gamma-\beta,-\beta)-d^h[w](\gamma-\beta,-\beta)}{1+\beta^2}d\beta. 
$$
From these expressions we obtain
$$
\|C_3\|_{L^2(\BB_{r'})}\leq C_R\|z-w\|_{L^\infty}\|\paa z_2\|_{L^2(\BB_{r'})},
$$
$$
\|C_4\|_{L^2(\BB_{r'})}\leq C_R\|\paa(z-w)\|_{L^2(\BB_{r'})},
$$
and
$$
\|C_5\|_{L^2(\BB_{r'})}\leq C_R\|z-w\|_{L^\infty}\|\paa z_2\|_{L^2(\BB_{r'})}.
$$
Collecting all these estimates, and due to Sobolev embedding and \eqref{IIcauchy2} we obtain
$$
\|B_1\|_{L^2(\BB_{r'})}\frac{C_R}{r-r'}\|z-w\|_{H^3(\BB_r)}. 
$$
We are done with \eqref{IIeq21}. Inequality \eqref{IIeq22} is equivalent to the bound $|\pat\paa z|<C_R$. Such a bound for the terms involving $\varpi_2$ can be obtained from \eqref{IIeq19b} and \eqref{IIeqw2}. For instance
\begin{multline*}
A_2=\text{P.V.}\int_\RR\frac{\paa\varpi_2(\gamma-\beta)(z(\gamma)-h(\gamma-\beta))^\perp}{(z_1(\gamma)-(\gamma-\beta))^2+(z_2(\gamma)+h_2)^2}d\beta=\\
-\text{P.V.}\int_\RR\varpi_2(\gamma-\beta)\partial_\beta\left(\frac{(z(\gamma)-h(\gamma-\beta))^\perp}{(z_1(\gamma)-(\gamma-\beta))^2+(z_2(\gamma)+h_2)^2}\right)d\beta\\
\leq C_R\|\varpi_2\|_{H^2(\BB_r)}\|d^h[z]\|_{L^\infty}.
\end{multline*} 
The remaining terms can be handled in a similar way. Now we can finish with the forward and backward solvability step. Take $z(0)$ the analytic extension of $z$ in \eqref{IIzR} (\eqref{IIzT} for the periodic case). We have $z(0)\in X_{r_0}$ for some $r_0>0$, it satisfies the arc-chord condition and does not reach the curve $h$, thus, there exists $R_0$ such that $z(0)\in O_{R_0}$. We take $r<r_0$ and $R>R_0$ in order to define $O_R$ and we consider the iterates
$$
z_{n+1}=z(0)+\int_0^tF[z_n]ds,\;z_0=z(0),
$$
and assume by induction that $z_k\in O_R$ for $k\leq n$. Then, following the proofs in \cite{ccfgl,CGO,nirenberg1972abstract,nishida1977note}, we obtain a time $T_{CK}>0$ of existence. It remains to show that 
$$
\|d^-[z_{n+1}]\|_{L^\infty(\BB_r)},\|d^h[z_{n+1}]\|_{L^\infty(\BB_r)}<R,
$$
for some times $T_A, T_B>0$ respectively. Then we choose $T=\min\{T_{CK},T_A,T_B\}$ and we finish the proof. As $d^-$ has been studied in \cite{ccfgl} we only deal with $d^h$. Due to \eqref{IIeq20} and the definition of $z(0)$, we have
$$
(d^{+}[z_{n+1}])^{-1}>\frac{1}{R_0}-C_R(t^2+t),
$$
and, if we take a sufficiently small $T_B$ we can ensure that for $t<T_B$ we have $d^h[z_{n+1}]<R$. We conclude the proof of the Theorem.
\end{proof}

We observe that in the periodic case the curve $z$ is of the same order as $h_2$, so, even if $h_2>>1$, this result is not some kind of \emph{linearization}. The same result is valid if $\mathcal{K}<<1$ for any $h_2$ (see Theorem \ref{IIteo4}). Moreover, we have numerical evidence showing that for every $|\mathcal{K}|<1$ and $h_2=\pi/2$ (and not $h_2>>1$) there are curves showing turning effect.

\begin{NE}
There are curves such that for every $|\mathcal{K}|<1$ and $h_2=\pi/2$ turn over.
\end{NE}
Let us consider first the periodic setting. Recall the fact that $h_2=\pi/2$ and let us define
\begin{equation}\label{IIzTNE}
z_1(\alpha)=\alpha-\sin(\alpha), \;\;z_2(\alpha)=\frac{\sin(3\alpha)}{3}-\sin(\alpha)\left(e^{-(\alpha+2)^2}+e^{-(\alpha-2)^2}\right)\text{ for $\alpha\in\TT$}.
\end{equation}
Inserting this curve in \eqref{IIsing3T} we obtain that for any possible $-1<\mathcal{K}<1$, 
$$
I_1(0)+|I_2(0)|<0.
$$
In particular
$$
\paa v_1^p(0)=I_1(0)+I_2(0)<I_1(0)+|I_2(0)|<0.
$$
Let us introduce the algorithm we use. We need to compute 
$$
\paa v^p_1(0)=\int_0^\pi\mathcal{I}_1+\int_0^\pi\mathcal{I}_2,
$$ 
where $\mathcal{I}_i$ means the $i-$integral in \eqref{IIsing3T}. Recall that $\mathcal{I}_i$ is two times differentiable, so, we can use the sharp error bound for the trapezoidal rule. We denote $dx$ the mesh size when we compute the first integral. We approximate the integral of $\mathcal{I}_1$ using the trapezoidal rule between $(0.1,\pi)$. We neglect the integral in the interval $(0,0.1)$, paying with an error denoted by $|E^1_{PV}|=O(10^{-3})$. The trapezoidal rule gives us an error $|E^1_I|\leq\frac{dx^2(\pi-0.1)}{12}\|\paa^2 \mathcal{I}_1\|_{L^\infty}$. As we know the curve $z$, we can bound $\paa^2 \mathcal{I}_1$. We obtain
$$
|E^1_I|\leq dx^2\frac{(\pi-0.1)}{6}10^{5}.
$$
We take $dx=10^{-7}$. Putting all together we obtain 
\begin{equation*}
|E^1|\leq|E^1_{PV}|+|E^1_I|+\leq 3O(10^{-3})= O(10^{-2}). 
\end{equation*}
Then, we can ensure that
\begin{equation}\label{III1err}
\paa z_2(0)\int_0^\pi\frac{\paa z_1(\beta)\sin(z_1(\beta))\sinh(z_2(\beta))}{(\cosh(z_2(\beta))-\cos(z_1(\beta)))^2}d\beta\leq -0.7+|E^1|<-0.6.
\end{equation}
We need to control analytically the error in the integral involving $\varpi^p_2$. This second integral has the error coming form the numerical integration, $E^2_I$ and a new error coming from the fact that $\varpi^p_2$ is known with some error. We denote this new error as $E^2_\varpi$. Let us write $\tilde{dx}$ the mesh size for the second integral. Then, using the smoothness of $\mathcal{I}_2$, we have
$$
|E^2_I|\leq\frac{\tilde{dx}^2}{16}\|\varpi^p_2\|_{C^2}\leq\frac{\tilde{dx}^2}{4}\cdot50.
$$
We take $\tilde{dx}=10^{-4}.$ It remains the error coming from $\varpi_2^p$. The second vorticity, $\varpi_2^p$, is given by the integral \eqref{IIw2p}. We compute the integral \eqref{IIw2p} using the same mesh size as for $I_2$, $\tilde{dx}$. Thus, the errors are
$$
|E_\varpi^2|\leq O(10^{-3}),
$$  
Putting all together we have
$$
|E^2|\leq |E^2_I|+|E^2_\varpi|\leq O(10^{-2}),
$$
and we conclude 
\begin{equation}\label{III2err}
\left|\frac{\paa z_2(0)}{4\pi}\int_0^\pi\frac{(\varpi^p_2(\beta)+\varpi^p_2(-\beta))(-1+\cosh(h_2)\cos(\beta))}{(\cosh(h_2)-\cos(\beta))^2}d\beta\right|\leq 0.1+|E^2|<0.2.
\end{equation}
Now, using \eqref{III1err} and \eqref{III2err}, we obtain $\paa v_1^p(0)<0$, and we are done with the periodic case. We proceed with the flat at infinity case. We have to deal with the unboundedness of the domain so we define
\begin{equation}\label{IIzRNE}
z_1(\alpha)=\alpha-\sin(\alpha)\exp(-\alpha^2/100), \;\;z_2(\alpha)=\frac{\sin(3\alpha)}{3}-\sin(\alpha)\left(e^{-(\alpha+2)^2}+e^{-(\alpha-2)^2}\right)\textbf{1}_{\{|\alpha|<\pi\}}.
\end{equation}

Inserting this curve in \eqref{IIsing3R} we obtain that for any possible $-1<\mathcal{K}<1$, 
$$
I_1(0)+|I_2(0)|<0.
$$
Then, as before,
$$
\paa v_1(0)=I_1(0)+I_2(0)<I_1(0)+|I_2(0)|<0.
$$
The function $z_2$ is Lipschitz, so the same for $\mathcal{I}_1$, where now $\mathcal{I}_i$ are the expressions in \eqref{IIsing3R} and the second integral $I_2$ is over an unbounded interval. To avoid these problems we compute the numerical aproximation of
$$
\int_{0.1}^{\pi-dx}\mathcal{I}_1+\int_0^{L_2}\mathcal{I}_2.
$$ 
Recall that $\varpi_2$ is given by \eqref{IIw2f} and then, due to the definition of $z_2$, we can approximate it by an integral over $(0,\pi-\tilde{dx})$. The lack of analyticity of $z_2$ and the truncation of $I_2(0)$ introduces two new sources of error. We denote them by $E^1_{z_2}$ and $E^2_\RR$. We take $dx=10^{-7},\tilde{dx}=10^{-4}$ and $L_2=2\pi$. Using the bounds $z_1\leq\pi$, $\paa z_1\leq 2$ and $z_2\leq h_2$ we obtain
$$
|E^1_{z_2}|\leq\left|\int_{\pi-dx}^\pi\mathcal{I}_1\right|\leq dx\cdot0.2\cdot4\pi^2\leq 8\cdot10^{-7}.
$$
We have
$$
|\varpi_2(\beta)|\leq 4\pi\frac{(h_2+\max_\gamma|z_2(\gamma)|)\max_\gamma|\paa z_2(\gamma)|}{\min_\gamma(h_2+z_2(\gamma))^2+(\beta-z_1(\gamma))^2}d\gamma\leq \frac{4\pi\cdot3\cdot2}{\min_\gamma(h_2+z_2(\gamma))^2+(\beta-z_1(\gamma))^2}=C(\beta),
$$
and we get $C(\beta)<C(L_2)$ for $\beta>L_2$. Using this inequality we get the desired bound for the second error as follows:
$$
|E^2_{\RR}|\leq \frac{|C(L_2)|}{\pi}\int_{L_2}^\infty\frac{\beta^2}{\left(\beta^2+\left(\frac{\pi}{2}\right)^2\right)^2}\leq \frac{4\pi\cdot3\cdot2}{10}\cdot 0.05<4\cdot 10^{-1}.
$$
The other errors can be bounded as before, obtaining,
$$
|E^1|\leq |E^1_{PV}|+|E^1_I|+|E^1_{z_2}|=O(10^{-2}),
$$
$$
|E^2|\leq |E^2_{\varpi}|+|E^2_I|+|E^2_{\RR}|=0.42.
$$
We conclude 
\begin{equation}\label{III1errf}
\paa z_2(0)\cdot4\text{P.V.}\int_0^\infty\frac{\paa z_1(\beta)z_1(\beta)z_2(\beta)}{((z_1(\beta))^2+(z_2(\beta))^2)^2}d\beta\leq -0.7+|E^1|<-0.6,
\end{equation}
and
\begin{equation}\label{III2errf}
\left|-\frac{1}{2\pi}\text{P.V.}\int_0^\infty\frac{(\varpi_2(\beta)+\varpi_2(-\beta))\beta^2}{(\beta^2+h_2^2)^2}d\beta\right|<0.02+|E^2|<0.5.
\end{equation}
Putting together \eqref{III1errf} and \eqref{III2errf} we conclude $\paa v_1(0)<0$. 

In order to complete a rigorous enclosure of the integral, we are left with the bounding of the errors coming from the floating point representation and the computer operations and their propagation. In a forthcoming paper (see \cite{GG}) we will deal with this matter. By using interval arithmetics we will give a computer assisted proof of this result.

\subsection{Finite depth}
In this section we show the existence of finite time singularities for some curves and physical parameters in an explicit range (see \eqref{IIeqK1}). This result is a consequence of Theorem 4 in \cite{CGO} by means of a continuous dependence on the physical parameters. As a consequence the range of physical parameters plays a role. Indeed, we have
\begin{teo}
Let us suppose that the Rayleigh-Taylor condition is satisfied, \emph{i.e.} $\rho^2-\rho^1>0$, and take $0<h_2<\frac{\pi}{2}$. There are $f_0(x)=f(x,0)\in H^3(\RR)$, an admissible (see Theorem \ref{IIteo3}) initial datum, such that, for any $|\mathcal{K}|$ small enough, there exists solutions of \eqref{IIeqv2} such that $\lim_{t\rightarrow T^*}\|\pax f(t)\|_{L^\infty}=\infty$ for $0<T^*<\infty$. For short time $t>T^*$ the solution can be continued but it is not a graph.
\label{IIteo4}
\end{teo}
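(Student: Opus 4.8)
The plan is to reduce the statement to Theorem~4 of \cite{CGO} by viewing equation \eqref{IIeqv2} as an analytic perturbation, in the parameter $\mathcal{K}$, of the homogeneous finite depth Muskat equation \eqref{IIeq0.1} with $l=\pi/2$, which is recovered exactly when $\mathcal{K}=0$. I would follow the three step scheme used in the proof of Theorem~\ref{IIteo2}: first translate the turning into the pointwise sign condition $\partial_\alpha v_1(0)=\partial_t\partial_\alpha z_1(0)<0$ for curves that need not be graphs; then exhibit a curve realizing this sign; and finally establish forward and backward local solvability in the unstable regime via an abstract Cauchy--Kovalevsky theorem.

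For the first step, repeating the computation that leads to \eqref{IIsing3R}, but with the finite depth Birkhoff--Rott kernel \eqref{IIBSconf} and with $\varpi_2$ given by \eqref{IIw2defc}, one obtains an identity of the form $\partial_\alpha v_1(0)=\mathcal{A}[z]+\mathcal{B}_{\mathcal{K}}[z]$, where $\mathcal{A}[z]$ is precisely the quantity appearing in the homogeneous finite depth analysis of \cite{CGO} (and is independent of $\mathcal{K}$), while $\mathcal{B}_{\mathcal{K}}[z]$ gathers the integrals involving $\varpi_2$. Since $G_{h_2,\mathcal{K}}\in\mathcal{S}$ with norms bounded uniformly for $|\mathcal{K}|\le\delta(h_2)/2$, and since every term of \eqref{IIw2defc} carries an explicit factor $\mathcal{K}$, one has $\|\varpi_2\|_{L^\infty}\le C(h_2)\,|\mathcal{K}|\,\|\partial_\alpha z\|_{L^\infty}$, hence $|\mathcal{B}_{\mathcal{K}}[z]|\le C(h_2,z)\,|\mathcal{K}|$ for a fixed curve $z$. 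For the second step I would take $z$ to be the analytic approximation of the curve produced in Theorem~4 of \cite{CGO}, for which $\mathcal{A}[z]<0$ strictly; then for $|\mathcal{K}|$ small enough (in an explicit range obtained by comparing $|\mathcal{B}_{\mathcal{K}}[z]|$ with $|\mathcal{A}[z]|$) the perturbation cannot change the sign, so $\partial_\alpha v_1(0)<0$.

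The third step is where the real work lies. For this analytic curve I must solve the arbitrary-curve version \eqref{IIeqv} of the equation, locally in time forward and backward, in the Hardy--Sobolev scale $X_r$ on complex strips $\BB_r$ used in the proof of Theorem~\ref{IIteo2}. The argument mirrors that proof: one verifies the analogues of \eqref{IIeq20}--\eqref{IIeq22} for the right hand side of \eqref{IIeqv}, now additionally tracking the functions $d^h[z]$ and $d[z]$ of \eqref{IIeq24}--\eqref{IIeq25}, which measure the distance of $z$ from $h$ and from the fixed boundaries $y=\pm\pi/2$. The genuinely new point relative to the infinite depth case is the resolution of the Fredholm equation \eqref{IIw2def} for $\varpi_2$ on the complex strip; this is done through the explicit formula \eqref{IIw2defc}, using that convolution against the Schwartz kernel $G_{h_2,\mathcal{K}}$ is bounded on each $H^s(\BB_r)$ by Young's inequality with constants independent of $r$, so that $\varpi_2$ stands at the level of one derivative of $z$, exactly as in the unbounded case. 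With these estimates in hand, the abstract Cauchy--Kovalevsky theorem (as in \cite{ccfgl,CGO,nirenberg1972abstract,nishida1977note}) yields a solution on $[-T,T]$ inside a ball $O_R$ provided $T$ is small and the auxiliary distances $d^-[z],d^h[z],d[z]$ remain below $R$, the latter being secured, as in Theorem~\ref{IIteo2}, by further shrinking $T$.

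To conclude I would assemble the pieces as before: backward in time the solution stays an analytic graph with finite $H^3(\RR)$ norm, at $t=0$ the sign $\partial_\alpha v_1(0)<0$ forces $\lim_{t\to T^{*-}}\|\pax f(t)\|_{L^\infty}=\infty$ for some $0<T^*<\infty$, and for $t>T^*$ the curve persists but is no longer a graph. The main obstacle I anticipate is the bookkeeping in the third step: keeping the Cauchy--Kovalevsky estimates controlled enough that the time of existence does not degenerate, and making sure that the convolution with $G_{h_2,\mathcal{K}}$ together with the two geometric quantities $d^h$ and $d$ do not spoil the arc-chord control on the strip. By contrast, the sign computation in the first two steps is a routine perturbation of estimates already carried out in \cite{CGO}.
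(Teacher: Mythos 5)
Your proposal follows essentially the same route as the paper's proof: decompose $\partial_\alpha v_1(0)$ into the $\varpi_1$-contribution (handled by Theorem~4 of \cite{CGO}, giving a strictly negative quantity $-a^2$) plus the $\varpi_2$-contribution, bound the latter by $C(h_2,z)|\mathcal{K}|$ using the explicit factor of $\mathcal{K}$ in \eqref{IIw2defc} and the uniform control of $G_{h_2,\mathcal{K}}$, choose $|\mathcal{K}|$ small so the sign is preserved, and then run the Cauchy--Kovalevsky argument on the Hardy--Sobolev scale with the auxiliary distance functions. This matches the paper's three-step argument, including the explicit smallness condition on $\mathcal{K}$ analogous to \eqref{IIeqK1}.
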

\begin{proof}
The proof is similar to the proof in Theorem \ref{IIteo2}. First, using the result in \cite{CGO} we obtain a curve, $z(0)$, such that the integrals in $\paa v_1(0)$ coming from $\varpi_1$ have a negative contribution. The second step is to take $\mathcal{K}$ small enough, when compared with some quantities depending on the curve $z(0)$, such that the contribution of the terms involving $\varpi_2$ is small enough to ensure the singularity. Now, the third step is to prove, using a Cauchy-Kovalevsky theorem, that there exists local in time solutions corresponding to the initial datum $z(0)$. To simplify notation we take $\kappa^1(\rho^2-\rho^1)=4\pi$. Then the parameters present in the problem are $h_2$ and $\mathcal{K}$.

\textbf{Obtaining the correct expression:} As in \cite{CGO} and Theorem \ref{IIteo2} we obtain
$$
\paa v_1(0)=\pat\paa z_1(0)=I_1+I_2,
$$
where
$$
I_1=2\paa z_2(0)\int_0^\infty \frac{\paa z_1(\beta)\sinh(z_1(\beta))\sin(z_2(\beta))}{\left(\cosh(z_1(\beta))-\cos(z_2(\beta))\right)^2}+\frac{\paa z_1(\beta)\sinh(z_1(\beta))\sin(z_2(\beta))}{\left(\cosh(z_1(\beta))+\cos(z_2(\beta))\right)^2}d\beta,
$$
and
\begin{multline*}
I_2=\frac{\paa z_2(0)}{4\pi}\int_\RR\frac{\varpi_2(-\beta)(-\cosh(\beta)\cos(h_2)+1)}{(\cosh(\beta)-\cos(h_2))^2}d\beta\\
+\frac{\paa z_2(0)}{4\pi}\int_\RR\frac{\varpi_2(-\beta)(-\cosh(\beta)\cos(h_2)-\cos^2(h_2)+\sin^2(h_2))}{(\cosh(\beta)+\cos(h_2))^2}d\beta.
\end{multline*}

\textbf{Taking the appropriate curve and $\mathcal{K}$:}
From Theorem 4 in \cite{CGO} we know that there are initial curves $w_0$ such that $I_1=-a^2,$ $a=a(w_0)>0$. We take one of this curves and we denote this smooth, fixed curve as $z(0)$. We need to obtain $\mathcal{K}=\mathcal{K}(z(0),h_2)$ such that $\paa v_1(0)=-a^2+I_2<0$. As in \eqref{IIeq19b} we define
\begin{equation}\label{IIeq19d}
d_1^h[z](\gamma,\beta)=\frac{\cosh^2(\beta/2)}{\cosh(z_1(\gamma)-(\gamma-\beta))-\cos(z_2(\gamma)+h_2)},
\end{equation}
and
\begin{equation}\label{IIeq19f}
d_2^h[z](\gamma,\beta)=\frac{\cosh^2(\beta/2)}{\cosh(z_1(\gamma)-(\gamma-\beta))+\cos(z_2(\gamma)-h_2)}.
\end{equation}
From the definition of $I_2$ it is easy to obtain
$$
|I_2|\leq C(h_2)\paa z_2(0)\|\varpi_2\|_{L^\infty},
$$
where
$$
C(h_2)=\frac{1}{4\pi}\int_\RR\frac{\cosh(\beta)\cos(h_2)+1}{(\cosh(\beta)-\cos(h_2))^2}d\beta+\frac{1}{4\pi}\int_\RR\frac{\cosh(\beta)\cos(h_2)+\cos(2h_2)}{(\cosh(\beta)+\cos(h_2))^2}d\beta.
$$
From the definition of $\varpi_2$ for curves (which follows from \eqref{IIw2defc} in a straightforward way) we obtain
$$
\|\varpi_2\|_{L^\infty}\leq8\mathcal{K}\|\paa z_2\|_{L^\infty}\left(\|d_1^h[z]\|_{L^\infty}+\|d_2^h[z]\|_{L^\infty}\right)\left(1+\frac{\mathcal{K}}{\sqrt{2\pi}}\|G_{h_2,\mathcal{K}}\|_{L^1}\right).
$$
Fixing $0<h_2<\pi/2$ and collecting all the estimates we obtain
\begin{equation*}
|I_2|\leq C(h_2)8\paa z_2(0)\mathcal{K}\|\paa z_2\|_{L^\infty}\left(\|d_1^h[z]\|_{L^\infty}+\|d_2^h[z]\|_{L^\infty}\right)\left(1+\frac{\mathcal{K}}{\sqrt{2\pi}}\sup_{|\mathcal{K}|<1}\|G_{h_2,\mathcal{K}}\|_{L^1}\right).
\end{equation*}
Now it is enough to take 
\begin{equation}\label{IIeqK1}
|\mathcal{K}_1(z(0),h_2)|<\frac{(C(h_2)8\paa z_2(0)\|\paa z_2\|_{L^\infty})^{-1}a^2}{\left(\|d_1^h[z]\|_{L^\infty}+\|d_2^h[z]\|_{L^\infty}\right)\left(1+\frac{1}{\sqrt{2\pi}}\sup_{|\mathcal{K}|<1}\|G_{h_2,\mathcal{K}}\|_{L^1}\right)},
\end{equation}
to ensure that $\paa v_1(0)<0$ for this curve $z(0)$ and any $|\mathcal{K}|<|\mathcal{K}_1(z(0),h_2)|$.

\textbf{Showing the forward and backward solvability:}
We define
\begin{equation}\label{IIeq19c}
d^-[z](\gamma,\beta)=\frac{\sinh^2(\beta/2)}{\cosh(z_1(\gamma)-z_1(\gamma-\beta))-\cos(z_2(\gamma)-z_2(\gamma-\beta))},
\end{equation}
and
\begin{equation}\label{IIeq19e}
d^+[z](\gamma,\beta)=\frac{\cosh^2(\beta/2)}{\cosh(z_1(\gamma)-z_1(\gamma-\beta))+\cos(z_2(\gamma)-z_2(\gamma-\beta))}.
\end{equation}
Using the equations \eqref{IIeq19d},\eqref{IIeq19f},\eqref{IIeq19c} and \eqref{IIeq19e}, the proof of this step mimics the proof in Theorem \ref{IIteo2} and the proof in \cite{CGO} and so we only sketch it. As before, we consider curves $z$ satisfying the arc-chord condition and such that
$$
\lim_{|\alpha|\rightarrow\infty}|z(\alpha)-(\alpha,0)|=0.
$$
We define the complex strip $\BB_r=\{\zeta+i\xi, \zeta\in\RR, |\xi|<r\},$ and the spaces \eqref{IIXdef} with norm \eqref{IIXnorm} (see \cite{bakan2007hardy}). We define the set
\begin{multline*}
O_R=\{z\in X_r\text{ such that }\|z\|_r<R, \|d^-[z]\|_{L^\infty(\BB_r)}<R, \|d^+[z]\|_{L^\infty(\BB_r)}<R,\\
 \|d_1^h[z]\|_{L^\infty(\BB_r)}<R,\|d_2^h[z]\|_{L^\infty(\BB_r)}<R\},  
\end{multline*}
where $d_i^h[z]$ and $d^\pm[z]$ are defined in \eqref{IIeq19d}, \eqref{IIeq19f}, \eqref{IIeq19c} and \eqref{IIeq19e}, respectively. As before, we have that, for $z,w\in O_R$, complex extension of \eqref{IIeqv}, $F:O_R\rightarrow X_{r'}$ is continuous and the following inequalities holds:
\begin{eqnarray*}&&\|F[z]\|_{H^3(\BB_{r'})}\leq\frac{C_R}{r-r'}\|z\|_r,\\
&&\|F[z]-F[w]\|_{H^3(\BB_{r'})}\leq\frac{C_R}{r-r'}\|z-w\|_{H^3(\BB_r)},\\
&&\sup_{\gamma\in \BB_r,\beta\in\RR}|F[z](\gamma)-F[z](\gamma-\beta)|\leq C_R|\beta|.
\end{eqnarray*} We consider 
$$
z_{n+1}=z(0)+\int_0^tF[z_n]ds, \; z_0=z(0).
$$
Using the previous properties of $F$ we obtain that, for $T=T(z(0),R)$ small enough, $z^{n+1}\in  O_R,$ for all $n$. The rest of the proof follows in the same way as in \cite{nirenberg1972abstract, nishida1977note}.
\end{proof}

\bibliographystyle{abbrv}

\end{document}